\pgfplotsset{compat=1.16}
\newcommand{\subscript}[2]{$#1 _ #2$}
\DeclareFontFamily{U}{min}{}
\DeclareFontShape{U}{min}{m}{n}{<-> udmj30}{}
\newtheorem{theorem}{Theorem}[section]
\newtheorem*{theorem*}{Theorem}
\newtheorem{proposition}[theorem]{Proposition}
\newtheorem{lemma}[theorem]{Lemma}
\newtheorem{example}[theorem]{Example}
\newtheorem{corollary}[theorem]{Corollary}
\newtheorem*{problem*}{Problem}
\theoremstyle{definition}
\newtheorem{definition}[theorem]{Definition}
\newcommand*\circled[1]{\tikz[baseline=(char.base)]{
            \node[shape=circle,draw,inner sep=2pt] (char) {#1};}}
\newcommand*{\rom}[1]{\expandafter\@slowromancap\romannumeral #1@}
\newcommand{\id}{\text{id}}
\newcommand{\M}{\mathcal{M}}
\newcommand{\Tsim}{\stackrel T\sim }
\newcommand\stackrqarrow[2]{%
    \mathrel{\stackunder[2pt]{\stackon[4pt]{$\rightsquigarrow$}{$\scriptscriptstyle#1$}}{%
            $\scriptscriptstyle#2$}}}
\title{ The Connectedness Homomorphism between Discrete Morse Complexes}
\author{Chong Zheng}
\address{Faculty of  Science and Engineering, Waseda University, 
Ohkubo, Shinkuju-ku, Tokyo, 169-8555 Japan}
\email{c{\_}zheng@aoni.waseda.jp}
\keywords{Discrete Morse theory, Connectedness, Connectedness homomorphism,  Birth-death transitions.}
\begin{document}

\begin{abstract}
Given two discrete Morse functions on a 
 simplicial complex, we
  introduce the {\em connectedness homomorphism}
   between the corresponding discrete Morse complexes. 
This concept 
leads to a novel framework 
for studying the connectedness in discrete Morse theory at the 
chain complex level.
In particular, we apply it to
 describe a discrete analogy to `cusp-degeneration' of Morse complexes.
A precise comparison between smooth case and our discrete cases is also given. 

\end{abstract}
\maketitle

\section{Introduction}

 Let $K$ be a finite simplicial complex,
 with $f_1, f_2: K\to \mathbb{R}$ representing two distinct 
\textit{ discrete Morse functions} on $K$.
In our previous  paper \cite{no.1},
we have studied the \textit{(strong) connectedness }
between an $f_1$-critical 
simplex $\tilde{\sigma}_1$  and 
an $f_2$-critical 
simplex  $\tilde{\sigma}_2$,  which characterizes
 the structure of 
 the \textit{gradient vector fields} on $K$
  existing between these two simplices.
 
 Namely,  at simplicial complex level,
we study the uniqueness of 
gradient path, 
the uniqueness and the quantity of 
strong connectedness, and 
their correlation with
the Betti numbers and the Euler number of $K$ \cite{no.1, no.2}.
Note that
 the concept was inspired 
 by a similar one
  in King-Knudson-Mramor Kosta \cite{birth and death}, 
  but there are some crucial differences (see Section 2.3).

In this
paper,
  our primary aim is to establish a
theoretical framework
for understanding the connectedness between
discrete Morse functions
   $f_1$ and  $f_2$
  at the \textit{chain complex level.}
First, we  define the 
 \textit{connectedness homomorphism }
 between two discrete Morse complexes corresponding
 to   $f_1$ and  $f_2$,
 derived from the simplex connectedness 
 between corresponding critical simplices.
 Then, we
 explore some fundamental properties of 
 connectedness homomorphisms.
 In particular,
  using these properties, 
  we give an appropriate description
   of a discrete analogy, 
   called {\em birth-death transition},  
   to the cusp-degeneration 
   of critical points of smooth Morse function in Cerf theory \cite{cerf}.  
In case of smooth Morse theory,
Laudenbach \cite{laudenbach}
described the change of smooth Morse complexes
before and after a cusp-degeneration.
We demonstrate that a 
similar description is 
applicable within the context of discrete Morse functions. 
Hence, 
through Gallais \cite{Gallais} and Benedetti \cite{Benedetti},
our results recover the main results of 
\cite{laudenbach}
but also work for a wider context of 
any simplicial complexes.
This is  one of the initial steps in seeking 
for a discrete analogue to smooth Cerf theory.

In Section 2, 
we provide a
 concise overview of the 
  preliminaries, 
 including 
 the fundamentals of
  discrete Morse theory which are introduced by Forman \cite{Forman_guide, Morse Theory for Cell Complexes},  
  as well as the concepts
   of connectedness and strong connectedness of \cite{no.1}.
In Section 3,
we introduce the connectedness homomorphism 
between two discrete Morse complexes,
and investigate some fundamental properties of it.
In section 4,
we apply the 
connectedness homomorphism 
to describe the  birth-death transition of the discrete Morse complexes,
especially,  in Theorem \ref{A-degeneration},
we prove that 
when the connectedness homomorphisms are birth-death transitions,
 they are chain maps.
Furthermore, 
we establish a clear relationship between 
the cusp-degeneration in smooth  case and that in our discrete case in Proposition 4.5.

\section{Preliminaries}
This section presents 
fundamental concepts for this paper.

\subsection{Discrete Morse Functions and Gradient Vector Fields}
For a comprehensive
understanding 
of  \textit{discrete Morse theory } and \textit{discrete Morse homology},
one may refer
 to  \cite{Forman_guide, Morse Theory for Cell Complexes,Discrete Morse Theory Kozlov, Discrete Morse Theory scoville}.

Let $K$ be a simplicial complex,
$K_p$ be the set of $p$-dimensional simplices.
We use $\sigma^{(p)}$ to represent a 
$p$-dimensional simplex.
When the context is clear, 
we omit the superscript $p$.
Notations $\tau \prec \sigma$  and  $\tau  \succ  \sigma$ are used
to denote
\textit{$\tau $ is a face of $\sigma$} and \textit{$\sigma$ is a face of $\tau $},
respectively.

A \textit{discrete Morse function}
$f:K \to \mathbb{R}$
satisfies for all
$\sigma \in K_p$,
\begin{enumerate}
\item $\displaystyle \# \{\tau^{(p+1)} \succ \sigma \, | \, f(\tau) \leq f(\sigma)\} \leq 1;$
\item 
$\displaystyle  \# \{v^{(p-1)} \prec \sigma \, | \, f(\sigma) \leq f(v)\} \leq 1,$
\end{enumerate}
where $\#$ denotes the cardinality of a set.

We say  a simplex
$\sigma^{(p)}$ is a 
\textit{critical} $p$-dimensional simplex (of $f$)
if
\begin{enumerate}
\item 
$\displaystyle\# \{\tau^{(p+1)} \succ \sigma \, | \, f(\tau) \leq f(\sigma)\} =0;$
\item 
$\displaystyle\# \{v^{(p-1)} \prec \sigma \, | \, f(\sigma) \leq f(v)\} =0.$
\end{enumerate}
If $\sigma$ is a critical simplex,
then we call $f(\sigma)$ a \textit{critical value} of $f$.
Conversely, if a simplex $\sigma$ is not critical, 
then
we say that $\sigma$ is a \textit{non-critical} simplex,
and $f(\sigma)$ is a \textit{non-critical value}.
We define
$$Cr_q(f):= \{ \sigma \in K_q \, | \, \sigma^{(q)} \text{ is } f \text{-critical}  
  \}$$ be the set of $q$-dimensional $f$-critical simplices.

A \textit{gradient vector field} $V$ corresponding to $f$ is 
the collection of pairs 
$(\alpha^{(p)}, \beta^{(p+1)})$
satisfying 
$\alpha \prec \beta$ and
$f(\alpha) \geq f(\beta)$.
We call such a pair $(\alpha, \beta)$
a \textit{gradient pair}.

A  \textit{gradient $V$-path} is a sequence 
of simplices
$$\alpha_0^{(p)}, \beta_0^{(p+1)}, \alpha_1^{(p)}, \beta_1^{(p+1)},\cdots, \beta_r^{(p+1)}, \alpha_{r+1}^{(p)} $$
such that for
each $i= 0, \cdots,r $,
$(\alpha^{(p)}, \beta^{(p+1)})\in V$
and
$\beta_i^{(p+1)} \succ \alpha_{i+1}^{(p)} \neq \alpha_i^{(p)}.$

We denote a \textit{gradient  $V$-path} $\gamma$ 
consisting of $p$ and $(p+1)$-dimensional 
simplices
from 
$\alpha_0^{(p)} $ to 
$\alpha_{r+1}^{(p)}$
by 
$$\gamma: \alpha_0^{(p)}  \stackrqarrow{V(p,p+1)}{}  \alpha_{r+1}^{(p)}. $$
Note that we can also take any $(p+1)$-dimensional simplex
$\beta_0^{(p+1)}$ as the starting simplex, and
$\beta_r^{(p+1)}$ as the ending simplex.
Thus, we can denote the corresponding 
path consisting of $p$ and $(p+1)$-dimensional simplices
by 
$$\gamma: \beta_0^{(p+1)} \stackrqarrow{V(p,p+1)}{}  \beta_r^{(p+1)}. $$
\subsection{Morse Complexes and Discrete Morse Complexes}
For details of Morse theory and 
Morse complexes, 
one may refer to 
\cite{Milnor, Guest}.
Let $M$
 be a compact manifold and 
 $\phi : M \to  \mathbb{R}$ be a Morse function.
The Morse complex
has \textit{chain groups }
$S_i^{\phi}(M)$ as 
the span of the critical points of index $i$.
The \textit{boundary homomorphisms}
$\partial_i^{\phi}: S_i^{\phi} \to S_{i-1}^{\phi}$
are defined in terms of 
the gradient flow lines of $-\nabla \phi$.
More precisely, 
let $p$ be a critical point
of $\phi$ of index $i$,
then
$\partial_i^{\phi}(p):= \sum_{\gamma} m(\gamma) \gamma(p),$
where
$\gamma(p)$ is a flow  from $p$ to a critical point of index $i-1$;
and 
$m(\gamma)=\pm 1$ depending on the 
orientation.
The Morse complex is defined as 
$\{S_{\ast}^{\phi}(M),  \partial^{\phi}_{\ast}\}$,
and is isomorphic to
the ordinary homology.

A discrete version of 
Morse-Smale complex is first constructed by Forman \cite{Forman_guide}.
As a discrete analogy to the Morse complex in ordinary Morse theory,
the $q$-dimensional
chain groups are obtained by spanning
 $q$-dimensional 
critical simplices, and the boundary homomorphisms
are  obtained by the gradient flows between simplices of 
consecutive dimensions.
Precise definitions are given as follows.

From now on, 
we suppose that each simplex of $K$ is oriented.
Then,  given simplices $\alpha^{(q-1)}\prec \sigma^{(q)}$,
we use $[ \sigma : \alpha]$ to represent \textit{the incidence number} of 
 $\sigma$ and $ \alpha$;
$[ \sigma : \alpha]=\pm 1$.

Let $f:K\to \mathbb{R}$ be a discrete Morse function.
We first define the 
\textit{chain group} of discrete Morse complex $C_q^f= C_q^f(K)$
as the free Abelian group generated by
 $\sigma_i \in Cr_q(f)$,  
and
the \textit{$q$-th boundary homomorphism 
$\partial_{q}^{f}: C_q^f \to C_{q-1}^f$ } 
is defined as the linear extension 
of
$$\partial_{q}^{f}(\sigma_i):= \sum_{ \alpha \in Cr_{q-1}(f)} n^{f}(\sigma_i, \alpha) \alpha.$$
Here, $n^{f}(\sigma_i, \alpha)$ is 
given by
$$n^{f}(\sigma_i, \alpha):= \sum_{\gamma } m(\gamma),$$
where $\gamma$ 
runs over all gradient paths $\sigma_i \stackrqarrow{V(q-1,q)}{}\alpha$,
and
$m(\gamma)$ is given by the product of 
each incidence number.
We call $n^{f}(\sigma_i, \alpha)$
the \textit{connectedness coefficient } between  
$\sigma_i$ and  $\alpha$.
In particular, $\partial_{0}^{f}$ is defined as
 the zero homomorphism.

It is known that 
$\partial^f \circ \partial^f=0$ \cite[Theorem 7.1]{Forman_guide}.  
Hence, $\{C_{\ast}^f, \partial_{\ast}^f \}$ 
forms the \textit{discrete Morse chain complex},
and the \textit{discrete Morse homology}
is defined as 
$$H_{\ast}:= \frac{\text{Ker} \partial^f_{\ast} }{\text{Im} \partial^f_{\ast}}.$$
Furthermore, discrete Morse homology is 
isomorphic to the ordinary homology \cite[Theorem 7.1]{Forman_guide}.

\subsection{$\partial$-Connectedness and Strong Connectedness of Critical simplices}

Let $f_1, f_2: K \to \mathbb{R} $ be two different discrete
Morse functions on $K$,
and $V_1$, $V_2$ be the corresponding gradient vector field,
respectively.
For $\tilde{\sigma}_1\in Cr_{q}(f_1)$ and 
$\tilde{\sigma}_2\in Cr_q(f_2)$, 
we  say that 
$\tilde{\sigma}_1$ 
is \textit{$\partial$-connected} to $\tilde{\sigma}_2$ if
\begin{itemize}
\item when $q\neq 0$, 
 there is an $f_1$-gradient path  
$$\tilde{\sigma}_1^{(q)} \stackrqarrow{V_1(q-1,q)}{} \tilde{\sigma}_2^{(q)}; $$

\item when $q=0$, 
there is an $f_2$-gradient path  
$$\tilde{\sigma}_1^{(0)} \stackrqarrow{V_2(0,1)}{} \tilde{\sigma}_2^{(0)}.$$
\end{itemize}

We call 
this relationship \textit{$\partial$-connectedness,}
and denote it 
by $\tilde{\sigma}_1 \to \tilde{\sigma}_2.$
From now, when the context is clear, 
we only call this the \textit{connectedness} between simplices.

Furthermore,
we say that
$\tilde{\sigma}_1$ 
is \textit{strongly connected} to $\tilde{\sigma}_2$
if $\tilde{\sigma}_1$ 
is connected to $\tilde{\sigma}_2$ and 
$\tilde{\sigma}_2$ 
is connected to $\tilde{\sigma}_1$,
denoted  by $\tilde{\sigma}_1 \leftrightarrow \tilde{\sigma}_2$.    
 
We remark that
$0$-dimensional connectedness from 
$ \tilde{\sigma}_1$ to $\tilde{\sigma}_2$ 
involves the gradient paths on 
$V_2$ that is different from other dimensional cases.
Fundamental properties of $0$-dimensional simplex connectedness
are studied in our previous paper \cite{no.1}.
With regard to 
the (strong)
\textit{$\partial$-connectedness,}
several 
properties including the relationship between
the strong connectedness number and 
the Euler characteristic,
are  investigated in \cite[Section 4]{no.1}.

Also, we note that our definition of $\partial$-connectedness is motivated by 
a similar proceeding notion introduced in \cite{birth and death}.
However, it is important to 
note that 
the two definitions are not identical for general cases.
Compared with  the connectedness in \cite{birth and death},
which consists of three consecutive dimensions,  the
$\partial$-connectedness in this paper specifies a ``direction",
thus only two dimensions are involved.

\section{The connectedness homomorphism between discrete Morse complexes}
In this section, we  construct the\textit{ connectedness homomorphism} between  discrete
Morse complexes of  the same simplicial complex.

\subsection{Connectedness Homomorphisms}

Suppose that
we are given discrete Morse functions 
$f_1, f_2:K \to \mathbb{R}$ on a simplicial complex $K$.
Let $m$ be the dimension of $K$.
We denote the 
discrete  Morse complexes by
$\{ C_{\ast}^{f_1}= C_{\ast}^{f_1}(K), \partial_{\ast}^{f_1} \}$ and 
$\{ C_{\ast}^{f_2}= C_{\ast}^{f_2}(K),  \partial_{\ast}^{f_2} \}$ corresponding to 
$f_1$ and $f_2$, respectively.

\begin{definition}

Let $q=1,2 \ldots, m$.
The \textit{$q$-dimensional connectedness homomorphism}
from $C_{q}^{f_1}$ to
$C_{q}^{f_2}$
$$h_q: C_{q}^{f_1} \to C_{q}^{f_2}$$
is defined as the linear extension of 
$$h_q(\tilde{\sigma}_1):= \sum_{\sigma_j\in Cr_q(f_2)} n^{f_1}(\tilde{\sigma}_1, \sigma_j) \sigma_j,$$
where $\tilde{\sigma}_1 \in Cr_q(f_1)$.

In particular,  when $q=0$,
we use $\tilde{v}_1$ to denote a 
$0$-dimensional $f_1$-critical simplex, and 
the \textit{$0$-dimensional connectedness homomorphism}
$h_0: C_{0}^{f_1} \to C_{0}^{f_2}$
is defined as the linear extension of 
$$h_0(\tilde{v}_1):= \sum_{v_j\in Cr_0(f_2)} n^{f_2}(\tilde{v}_1,v_j) v_j.$$

Note that 
the connectedness coefficient $n^{f_2}(\tilde{v}_1,v_j)$
is associated with 
the connectedness from 
$\tilde{v}_1$ to $v_j$, 
which is on the gradient vector field $V_2$ obtained from $f_2$.
Technically,  when $\tilde{\sigma}_1=\tilde{\sigma}_2$,
we let $$n^{f_1}(\tilde{\sigma}_1,\tilde{\sigma}_2)=n^{f_2}(\tilde{\sigma}_2,\tilde{\sigma}_1)=1.$$

On the other hand, 	
we can also define the connectedness homomorphism
from $C_q^{f_2}$ to
$C_q^{f_1}$
with a parallel way 
as 
$$g_q: C_{q}^{f_2} \to C_{q}^{f_1}.$$
Thus, 
we can use the  diagram in Figure \ref{diagram} to 
denote the connectedness homomorphisms and 
the boundary
homomorphisms of the two chain
complexes.

\end{definition}

\begin{figure}

\begin{center}

\begin{tikzcd}
{\vdots} \arrow[dd]             &  &  & {\vdots} \arrow[dd]             &  &  & {\vdots} \arrow[dd] \\
                          &  &  &                           &  &  &               \\
{C_{q+1}^{f_1}} \arrow[rrr,"h_{q+1}"] \arrow[dd,"\partial_{q+1}^{f_1}"] &  &  & {C_{q+1}^{f_2}} \arrow[dd,"\partial_{q+1}^{f_2}"] \arrow[rrr,"g_{q+1}"] &  &  & {C_{q+1}^{f_1}} \arrow[dd,"\partial_{q+1}^{f_1}"]\\
                          &  &  &                           &  &  &               \\
{C_q^{f_1}} \arrow[rrr,"h_q"] \arrow[dd,"\partial_{q}^{f_1}"] &  &  & {C_q^{f_2}} \arrow[dd,"\partial_{q}^{f_2}"] \arrow[rrr,"g_q"] &  &  & {C_q^{f_1}} \arrow[dd,"\partial_{q}^{f_1}"]\\
                          &  &  &                           &  &  &               \\
{C_{q-1}^{f_1}} \arrow[rrr,"h_{q-1}"]   \arrow[dd]         &  &  & {C_{q-1}^{f_2}} \arrow[rrr,"g_{q-1}"]   \arrow[dd]         &  &  & {C_{q-1}^{f_1}} \arrow[dd] \\
                          &  &  &                           &  &  &               \\
{\vdots}                        &  &  & {\vdots}                        &  &  & {\vdots}         
\end{tikzcd}
\end{center}

\caption{Connectedness homomorphisms and the boundary homomorphisms.}

\label{diagram}
\end{figure}
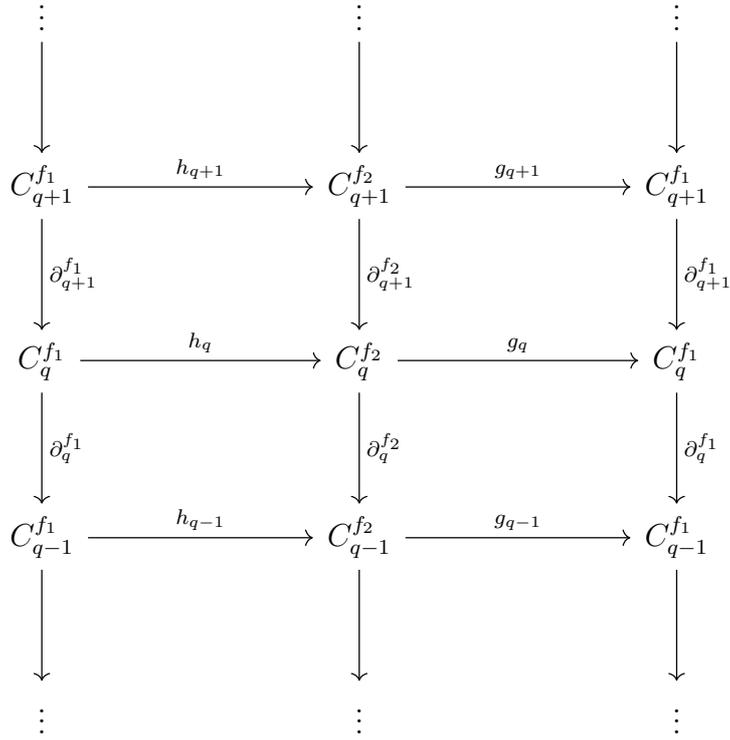

Regarding
to the connectedness
homomorphism, it is natural to ask the follows:
\begin{enumerate}

\item Under what conditions $h=\{h_q\}_q$ and $g=\{g_q\}_q$
are  \textit{chain maps}? 
 $h$ is called a \textit{chain map} if 
for any $q>0$,
$\partial^{f_2}_q \circ h_q= h_{q-1}\circ \partial^{f_1}_q$. 
It is equivalent to that
  the diagram in Figure 1 is commutative.
\item  Under what conditions $h$ and $g$ are inverse  to each other, 
i.e.
$h\circ g= g\circ h =\id$? 
 
 \item Under what conditions $h$ and $g$ are  \textit{faithful} connectedness homomorphisms?
(See Definition \ref{faithful})
\end{enumerate}

In the following subsections,
we give and explain partial solutions to those questions.
A more general answer to those questions 
requires a comprehensive understanding
of connectedness at the simplicial level, 
thus
more discussion
will be given in \cite{no.2}.

We begin with  the following example showing that not every 
connectedness homomorphism is a chain map.

\begin{example}
Figure \ref{an example}
illustrates  discrete Morse functions $f_1$ and $f_2$
on a graph $G$.
Red vertices and edges denote critical simplices 
 with respect to 
each discrete Morse function.
Arrows from $0$-dimensional simplices to
$1$-dimensional simplices 
represent the gradient pairs.

We can compute,  modulo $2$, that 
$\partial^{f_1} \circ g_1 (e_2^1)= v_1^1 + v_1^2$;
and
$g_0\circ \partial^{f_2} (e_2^1) = 0$.
Therefore, 
the connectedness homomorphism $g$ is  
not a chain map.
\end{example}

\begin{figure}[ht]
\begin{minipage}{\textwidth}

  \centering

\begin{tikzpicture}[>=stealth, thick, scale=1.25,
  decoration={markings, mark=at position 0.5 with {\arrow{>}}}
]
\foreach \i in {1,...,6} {
    \pgfmathsetmacro{\angle}{360/6*\i}
    \def\nodecolor{black}
    \ifnum\i=1
      \def\nodecolor{black}
    \fi
    \node[draw, circle, fill=\nodecolor, inner sep=2pt] (A\i) at (\angle:2) {};
  
  }
  
   \draw[red] (A1) -- (A2) node[midway, above,black] {$e_1^1$};  
    \draw[postaction={decorate}] (A2) -- (A3);
\draw[postaction={decorate}] (A3) -- (A4);
  \draw[postaction={decorate}] (A4) -- (A5);
\draw[postaction={decorate}] (A6) -- (A5);
 \draw[postaction={decorate}] (A1) -- (A6);

  \path let \p1 = (A2) in \pgfextra{\xdef\yTwo{\y1}};
  \path let \p1 = (A4) in \pgfextra{\xdef\yFour{\y1}};

  \foreach \i in {10,11,12} {
    \pgfmathsetmacro{\xcoord}{-7 + (\i - 9) * 1.5} 
    \def\nodecolor{black}
    \ifnum\i=10
      \def\nodecolor{red} 
      \node at (\xcoord, \yFour - 0.3cm) {$v_1^1$};
    \fi
    \node[draw, circle, fill=\nodecolor, inner sep=2pt] (A\i) at (\xcoord,\yFour) {};
  }

  \draw[red] (A10) -- (A11) node[midway, above,black] {$e_1^2$};  
  \draw[postaction={decorate}] (A11) -- (A12);
  \draw[postaction={decorate}] (A12) -- (A4); 
  
  \foreach \i in {13,14,15} {
    \pgfmathsetmacro{\xcoord}{-3.5 + (\i - 9) * 1.5} 
    \def\nodecolor{black}
    \ifnum\i=15
      \def\nodecolor{red} 
      \node at (\xcoord, \yFour - 0.3cm) {$v_1^2$};
    \fi
    \node[draw, circle, fill=\nodecolor, inner sep=2pt] (A\i) at (\xcoord,\yFour) {};
  }

  \draw[postaction={decorate}] (A14) -- (A15);
  \draw[postaction={decorate}] (A13) -- (A14);
  \draw[postaction={decorate}] (A5) -- (A13); 
\end{tikzpicture}
(a) Graph $G$ with discrete Morse function $f_1$.

\vspace{0.5cm}

\end{minipage}

\begin{minipage}{\textwidth}
  \centering

\begin{tikzpicture}[>=stealth, thick, scale=1.25,
  decoration={markings, mark=at position 0.5 with {\arrow{>}}}
]
\foreach \i in {1,...,6} {
    \pgfmathsetmacro{\angle}{360/6*\i}
    \def\nodecolor{black}
    \ifnum\i=1
      \def\nodecolor{black}
    \fi
    \node[draw, circle, fill=\nodecolor, inner sep=2pt] (A\i) at (\angle:2) {};
  
  }
  
    \draw[postaction={decorate}] (A1) -- (A2);
    \draw[postaction={decorate}] (A2) -- (A3);
\draw[postaction={decorate}] (A3) -- (A4);
  \draw[red] (A4) -- (A5) node[midway, above,black] {$e_2^1$};  
\draw[postaction={decorate}] (A5) -- (A6);
 \draw[postaction={decorate}] (A6) -- (A1);

  \path let \p1 = (A2) in \pgfextra{\xdef\yTwo{\y1}};
  \path let \p1 = (A4) in \pgfextra{\xdef\yFour{\y1}};

  \foreach \i in {10,11,12} {
    \pgfmathsetmacro{\xcoord}{-7 + (\i - 9) * 1.5} 
    \def\nodecolor{black}
    \ifnum\i=10
      \def\nodecolor{red} 
      \node at (\xcoord, \yFour - 0.3cm) {$v_2^1$};
    \fi
    \node[draw, circle, fill=\nodecolor, inner sep=2pt] (A\i) at (\xcoord,\yFour) {};
  }

 \draw[postaction={decorate}] (A11) -- (A10);
  \draw[postaction={decorate}] (A12) -- (A11);
  \draw[postaction={decorate}] (A4) -- (A12); 
  
  \foreach \i in {13,14,15} {
    \pgfmathsetmacro{\xcoord}{-3.5 + (\i - 9) * 1.5} 
    \def\nodecolor{black}
    \ifnum\i=15
      \def\nodecolor{black} 
    \fi
    \node[draw, circle, fill=\nodecolor, inner sep=2pt] (A\i) at (\xcoord,\yFour) {};
  }

  \draw[postaction={decorate}] (A15) -- (A14);
  \draw[postaction={decorate}] (A14) -- (A13);
  \draw[postaction={decorate}] (A13) -- (A5); 
\end{tikzpicture}

(b) Graph $G$ with discrete Morse function $f_2$.
\end{minipage}
\caption{ Connectedness homomorphism $g$ is not a chain map.}
\label{an example}
\end{figure}

Now, we directly computing
the equation of chain map by 
employing the definition.
The following
 fundamental but important  necessary and sufficient  condition 
to make $h$ a chain map is 
straightforward by simple computations.

\begin{proposition}
The following $(1)$ and $(2)$ are equivalent:
\begin{enumerate}

\item $h$ is a chain map.
\item For any  $q> 1$ and any $\tilde{\sigma}_1\in Cr_q(f_1)$,
\begin{gather*}
 \sum_{\sigma_j \in Cr_q(f_2)}  n^{f_1}(\tilde{\sigma}_1, \sigma_j )\sum_{\alpha_i \in Cr_{q-1}(f_2)} n^{f_2}(\sigma_j , \alpha_i) \alpha_i= \\
 \sum_{\alpha_{\ell} \in Cr_{q-1}(f_1)} n^{f_1}(\tilde{\sigma}_1, \alpha_{\ell}) \sum_{\alpha_{i} \in Cr_{q-1}(f_2)} n^{f_1}(\alpha_{\ell}, \alpha_i) \alpha_i;
 \end{gather*}
\end{enumerate}
and for
$q=1,$
\begin{gather*}
 \sum_{\sigma_j \in Cr_1(f_2)}  n^{f_1}(\tilde{\sigma}_1, \sigma_j )\sum_{\alpha_i \in Cr_{0}(f_1)} n^{f_2}(\sigma_j , \alpha_i) \alpha_i= \\
 \sum_{\alpha_{\ell} \in Cr_{0}(f_1)} n^{f_1}(\tilde{\sigma}_1, \alpha_{\ell}) \sum_{\alpha_{i} \in Cr_{0}(f_2)} n^{f_2}(\alpha_{\ell}, \alpha_i) \alpha_i.
 \end{gather*}

\end{proposition}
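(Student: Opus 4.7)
The plan is to verify the proposition by a direct expansion argument, unwinding both sides of the chain map condition and comparing coefficients. Since $h_q$ and $\partial_q^f$ are defined by linear extension from critical simplices, the identity $\partial_q^{f_2} \circ h_q = h_{q-1} \circ \partial_q^{f_1}$ holds if and only if it holds on each generator $\tilde{\sigma}_1 \in Cr_q(f_1)$. Thus the proof reduces to computing the two compositions on such a $\tilde{\sigma}_1$ and equating the results term by term.

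First I would expand $\partial_q^{f_2} \circ h_q(\tilde{\sigma}_1)$ by substituting the defining formula for $h_q$ and then applying $\partial_q^{f_2}$ termwise to each $\sigma_j \in Cr_q(f_2)$. This produces exactly the left-hand double sum appearing in condition (2), indexed over $\sigma_j \in Cr_q(f_2)$ and $\alpha_i \in Cr_{q-1}(f_2)$, with coefficient $n^{f_1}(\tilde{\sigma}_1,\sigma_j)\, n^{f_2}(\sigma_j,\alpha_i)$. This part of the computation is uniform in $q$.

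Next I would expand $h_{q-1} \circ \partial_q^{f_1}(\tilde{\sigma}_1)$ by first applying $\partial_q^{f_1}$ and then $h_{q-1}$. Here the computation naturally splits by dimension: for $q > 1$, the definition of $h_{q-1}$ uses $n^{f_1}(\alpha_\ell,\alpha_i)$, producing the right-hand sum in the first displayed identity; for $q=1$, the special definition of $h_0$ uses $n^{f_2}(\alpha_\ell,\alpha_i)$ instead, which is precisely why condition (2) is written in two separate pieces. Equating the two expressions obtained and using that $Cr_{q-1}(f_2)$ is a free basis of $C_{q-1}^{f_2}$, the chain map equation is equivalent to matching coefficients on each $\alpha_i$, which yields exactly the two identities in (2).

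The argument is purely formal; there is no genuine obstacle. The only point requiring care is a bookkeeping one: one must remember that $h_0$ involves $n^{f_2}$ rather than $n^{f_1}$ because of the direction convention for $0$-dimensional connectedness introduced in Section 2.3, and conflating this with the higher-dimensional case would collapse the two equations in (2) incorrectly.
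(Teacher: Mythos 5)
Your proposal is correct and follows essentially the same route as the paper: expand $\partial_q^{f_2}\circ h_q(\tilde{\sigma}_1)$ and $h_{q-1}\circ\partial_q^{f_1}(\tilde{\sigma}_1)$ by linear extension of the defining formulas and compare, noting the separate treatment of $q=1$ because $h_0$ is defined via $n^{f_2}$. Your explicit remark on why the $q=1$ case splits off is a point the paper only handles implicitly by omission, but the argument is the same.
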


\begin{proof}
We calculate both sides for any $q>1$ and $\tilde{\sigma}_1 \in Cr_q(f_1)$:
\begin{align*}
\partial^{f_2}\circ h_q(\tilde{\sigma}_1)&=
 \partial^{f_2}(  \sum_{\sigma_j \in Cr_q(f_2)} n^{f_1}(\tilde{\sigma}_1,\sigma_j) \sigma_j) \\
&=  \sum_{\sigma_j \in Cr_q(f_2)} n^{f_1}(\tilde{\sigma}_1, \sigma_j)\partial^{f_2}(\sigma_j) \\
&= \sum_{\sigma_j \in Cr_q(f_2)} n^{f_1}(\tilde{\sigma}_1, \sigma_j)\sum_{\alpha_i \in Cr_{q-1}(f_2)} n^{f_2}(\sigma_j , \alpha_i) \alpha_i,
\end{align*} 

and

\begin{align*}
h_{q-1}\circ \partial^{f_1}(\tilde{\sigma}_1)&= h_{q-1}(\sum_{\alpha_{\ell} \in Cr_{q-1}(f_1)} n^{f_1}(\tilde{\sigma}_1, \alpha_{\ell}) \alpha_{\ell}) \\
&=  \sum_{\alpha_{\ell} \in Cr_{q-1}(f_1)} n^{f_1}(\tilde{\sigma}_1,\alpha_{\ell})h_{q-1}(\alpha_{\ell}) \\
&= \sum_{\alpha_{\ell} \in Cr_{q-1}(f_1)} n^{f_1}(\tilde{\sigma}_1, \alpha_{\ell}) \sum_{\alpha_i \in Cr_{q-1}(f_2)} n^{f_1}(\alpha_{\ell}, \alpha_i) \alpha_i.
\end{align*}

We omit the proof for the case $q=1$,
as it can be derived through similar computations.

\end{proof}


\subsection{ Connectedness and Connectedness Homomorphism}
In this subsection, we 
discuss the relationship between 
the connectedness of critical simplices and the connectedness homomorphism.

The connectedness between two 
critical simplices only depends on
the presence of discrete gradient flows.
In contrast,
the derivation 
of the connectedness homomorphism, 
as informed by the connectedness coefficient, 
needs 
not only the verification  of connectedness 
but also an understanding of the quantity
 and the orientation of such connectedness.
Hence, it is notably to remark 
that  the relationship 
between simplex connectedness and 
the connectedness homomorphism 
is significantly influenced by
 the choice of the coefficient in the calculation
 of the connectedness 
coefficient.

The number of 
gradient paths connecting 
$\sigma_1$ to $\sigma_2$ is 
essential for determining the connectedness coefficient 
$n^{f_1}(\sigma_1, \sigma_2)$.
When there are multiple 
such gradient paths,
those paths form 
cycles within the simplicial complex.
Although the isomorphism 
between 
discrete Morse homology and
ordinary (simplicial) homology 
is natural,
the geometric information 
contained in those two kinds of 
cycles are different.
A discrete Morse cycle always 
contains a simplicial cycle.
However,
a simplicial cycle may not
contain a discrete Morse cycle.

The following lemma
shows a fundamental relationship between 
those cycles and $\sigma_2$.

\begin{lemma}
Suppose that the $V_1$-gradient paths connecting
 $\sigma_1 \in Cr_q(f_1)$ to $\sigma_2\in Cr_q(f_2)$
 form cycles within $K$.
Then,  $\sigma_2$ does not participate in any cycle.

\end{lemma}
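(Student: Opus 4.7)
The plan is to exploit the strong asymmetry between $\sigma_1$ (which is $f_1$-critical and hence unpaired in $V_1$) and $\sigma_2$ (which, being the terminal $q$-simplex of a $V_1$-gradient path, is necessarily $V_1$-paired via the final pair of the path). Writing any such path from $\sigma_1$ to $\sigma_2$ in the form $\sigma_1, \alpha_1, \beta_1, \alpha_2, \ldots, \alpha_r, \sigma_2$, the terminal pair $(\alpha_r, \sigma_2) \in V_1$ singles out a distinguished $(q-1)$-face $\alpha^* := \alpha_r$ of $\sigma_2$. By uniqueness of the $V_1$-pair, this $\alpha^*$ is independent of the chosen path. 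The discrete Morse condition (ii) at $\sigma_2$ further characterises $\alpha^*$ as the only $(q-1)$-face $v \prec \sigma_2$ with $f_1(v) \ge f_1(\sigma_2)$, so every other $(q-1)$-face $\alpha \prec \sigma_2$ satisfies $f_1(\alpha) < f_1(\sigma_2)$.

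The central step is an $f_1$-monotonicity argument along the path. From $(\alpha_i,\beta_i) \in V_1$ one has $f_1(\alpha_i) \ge f_1(\beta_i)$, and from condition (ii) applied to $\beta_i$ — together with the fact that $\alpha_{i+1} \ne \alpha_i$ is not the paired face of $\beta_i$ — one has $f_1(\alpha_{i+1}) < f_1(\beta_i)$. Combining these yields the strict monotone chain
\[
  f_1(\alpha_1) > f_1(\alpha_2) > \cdots > f_1(\alpha_r) \ge f_1(\sigma_2),
\]
so $f_1(\alpha_j) > f_1(\sigma_2)$ for every $j < r$. Now suppose that some $(q-1)$-face $\alpha \ne \alpha^*$ of $\sigma_2$ were to occur as some $\alpha_j$ in a gradient path. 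Since $\alpha \ne \alpha^* = \alpha_r$, necessarily $j < r$, which gives $f_1(\alpha) > f_1(\sigma_2)$; but the Morse condition at $\sigma_2$ simultaneously forces $f_1(\alpha) < f_1(\sigma_2)$, a contradiction. Hence $\alpha^*$ is the \emph{only} $(q-1)$-face of $\sigma_2$ that ever appears in a $V_1$-gradient path from $\sigma_1$ to $\sigma_2$.

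The lemma then follows by a pendant-simplex argument. Let $S \subset K$ be the subcomplex spanned by the simplices appearing in these paths. In $S$, the $q$-simplex $\sigma_2$ meets the rest of $S$ only across the single $(q-1)$-face $\alpha^*$. Any cycle through $\sigma_2$ — whether read simplicially, or as a closed walk in the graph of $q$-simplices joined by shared $(q-1)$-faces — would require two distinct $(q-1)$-faces of $\sigma_2$ to participate, one for entry and one for exit. Since only $\alpha^*$ is available, $\sigma_2$ cannot participate in any cycle formed by the paths.

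The main obstacle I anticipate is notational rather than mathematical: one must first commit to a precise meaning for ``the paths form cycles within $K$'' and ``$\sigma_2$ participates in a cycle''. I adopt the natural reading described above, in which cycles live in the subcomplex $S$ spanned by the path simplices. A secondary concern is the degenerate case $\sigma_2 \in Cr_q(f_1) \cap Cr_q(f_2)$, in which $\sigma_2$ is not $V_1$-paired; under the standing convention that a $V_1$-gradient path to $\sigma_2$ must terminate with a $V_1$-pair involving $\sigma_2$, no such path exists and the hypothesis of the lemma is then vacuous.
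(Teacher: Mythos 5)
Your proof is correct and rests on essentially the same observation as the paper's: since $\sigma_2$ is the terminal simplex of a $V_1$-gradient path from a distinct simplex, it is not $f_1$-critical and therefore has a unique $(q-1)$-face $\alpha^*$ with $(\alpha^*,\sigma_2)\in V_1$, which makes $\sigma_2$ a pendant of the subcomplex swept out by the paths. Your additional $f_1$-monotonicity argument, showing that no other $(q-1)$-face of $\sigma_2$ can even appear in any of the paths, supplies a step the paper leaves implicit and makes the ``only one face available for entry or exit'' conclusion airtight.
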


\begin{proof}
A straightforward fact from 
the definition of connectedness is 
that 
if $\sigma_1$ is connected to $\sigma_2$ and $\sigma_1 \neq \sigma_2$,
then $\sigma_2$
is not critical with respective to $f_1$.
Thus,  by the definition
of discrete Morse function,
there exists only one $\alpha_2^{(q-1)} \prec \sigma_2$
that is in the $V_1$-gradient pair $(\alpha_2, \sigma_2)$.
 Consequently, 
 $\sigma_2$ can not participate in any of the cycles.

\end{proof}

Since for each gradient path
$$\gamma: \sigma_1 \stackrqarrow{V_1(q-1,q)}{} \sigma_2,$$
$m(\gamma)$ equals $\pm 1$,
the parity (odd or even) 
of the number of such gradient paths
depends on the connectedness coefficient.
More precisely,
\begin{itemize}
\item $n^{f_1}(\sigma_1, \sigma_2)=0$ implies
the number of gradient paths is even;
and 

\item $|n^{f_1}(\sigma_1, \sigma_2)|=1$ implies
the number of gradient paths is odd,
where $|\cdot|$
denotes the absolute value.

\end{itemize}

On the contrary,
the validity of 
these statements may be false due to 
  the potential presence of torsion in $K$.
However, 
 if calculations are performed using coefficient
$\mathbb{Z}_2$,
then the converse of above statements holds true.
On the other hand,
when the homology of $K$ consists of
torsion element,
the connectedness coefficient 
 $|n^{f_1}(\sigma_1, \sigma_2)|$ 
may equal to $k>1$.

Also, $n^{f_1}(\sigma_1, \sigma_2)\neq 0$ implies 
$\sigma_1$ is connected to $\sigma_2$.  However, the converse does not 
hold.

\subsection{Faithful Homomorphisms }

In further investigations and applications of 
the connectedness between $f_1$ and $f_2$,
it is necessary to first establish
  both connectedness homomorphisms
  $h$ and $g$  as chain maps. 
The concepts of \textit{weak faithfulness} 
and \textit{faithfulness} are introduced
to create a locally one-to-one correspondence of 
simplex connectedness,
which is the case when $\tilde{\sigma}_1$ is only connected to 
one $\tilde{\sigma}_2\in Cr_q(f_2)$
in Proposition 3.2 (2).

\begin{definition}
\label{weakly faithful}
Let $\tilde{\sigma}_1\in Cr_q(f_1)$ and $\tilde{\sigma}_2\in Cr_q(f_2)$.
We say $\tilde{\sigma}_1$ is \textit{weakly faithful}
to  $\tilde{\sigma}_2$
if  
$$h_q(\tilde{\sigma}_1)= n^{f_1} (\tilde{\sigma}_1, \tilde{\sigma}_2) \tilde{\sigma}_2,$$
where $ n^{f_1} (\tilde{\sigma}_1, \tilde{\sigma}_2)\neq 0$.

\end{definition}

Note that when $q=0$,
the connectedness coefficient is
associated with $f_2$.

Let $\tilde{\sigma}_1\in Cr_q(f_1)$ and $\tilde{\sigma}_2\in Cr_q(f_2)$ 
be weakly faithful to each other.
When the computations are performed with 
coefficient $\mathbb{Z}_2$,
then
$n^{f_1} (\tilde{\sigma}_1, \tilde{\sigma}_2)= n^{f_2} (\tilde{\sigma}_2, \tilde{\sigma}_1)=1.$

Next, we define a stronger correspondence 
between $\tilde{\sigma}_1$ and $\tilde{\sigma}_2$
than the weak faithfulness
as follows.

\begin{definition}
\label{faithful}
Let $\tilde{\sigma}_1\in Cr_q(f_1)$ and $\tilde{\sigma}_2\in Cr_q(f_2)$.
We say 
$\tilde{\sigma}_1$ is \textit{ faithful}  to
 $\tilde{\sigma}_2$
 if  $\tilde{\sigma}_1$ is weakly faithful to  $\tilde{\sigma}_2$,
and
 $$\sum_{\alpha_1 \in Cr_{q-1}(f_1)} n^{f_1} (\tilde{\sigma}_1, \alpha_1  )  \sum_{\alpha_2 \in  Cr_{q-1} (f_2)} n^{f_1}( \alpha_1 , \alpha_2) \alpha_2  =  \sum_{\alpha_2 \in  Cr_{q-1} (f_2)} n^{f_1}( \tilde{\sigma}_1, \tilde{\sigma}_2) 
n^{f_2} (\tilde{\sigma_2} ,  \alpha_2)  \alpha_2.$$
If for  any
$\tilde{\sigma}_1\in Cr_q(f_1)$,
 there 
exists 
$\tilde{\sigma}_2\in Cr_q(f_2)$
such that 
$\tilde{\sigma}_1$ is faithful to 
 $\tilde{\sigma}_2$,
 then we call
 $h_q$  a \textit{faithful homomorphism}.
 Furthermore,
if  any $q\in \mathbb{N}$, 
$h_q$ is faithful,
then we say $h$ is \textit{faithful}.

\end{definition}

With regard to 
the zero-dimensional 
connectedness homomorphism 
$h_0$,
we present the following proposition
to show a basic property of $h_0$.

\begin{proposition}
$h_0$ is a faithful homomorphism.
\end{proposition}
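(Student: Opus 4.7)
The plan is to reduce the faithfulness condition to weak faithfulness when $q=0$, and then to establish weak faithfulness by a uniqueness argument for $V_2$-trajectories starting at an $f_1$-critical vertex.

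First I would observe that $C_{-1}^{f_1}$ and $C_{-1}^{f_2}$ are both trivial, so for $q=0$ the two sides of the equation in Definition \ref{faithful} are empty sums and coincide automatically. Consequently the faithfulness condition collapses to the weak faithfulness condition of Definition \ref{weakly faithful}, and it suffices to show that for every $\tilde{v}_1\in Cr_0(f_1)$ there is a unique $v_j\in Cr_0(f_2)$ with $n^{f_2}(\tilde{v}_1,v_j)\neq 0$, and that this coefficient is $\pm 1$.

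The main point is that the $V_2$-trajectory starting at a $0$-simplex is deterministic. Since $V_2$ is a matching of simplices of consecutive dimensions, every $0$-simplex $v$ lies in at most one pair $(v,\beta)\in V_2$. Starting from $\tilde{v}_1$, either $\tilde{v}_1$ is $V_2$-critical, in which case the trivial path gives $n^{f_2}(\tilde{v}_1,\tilde{v}_1)=1$ by the convention recorded in the definition of $h_0$, or $\tilde{v}_1$ is paired with a unique $\beta_0$ and the path must continue to the opposite vertex $v_1$ of $\beta_0$, then to $v_2$, and so on. Because $V_2$ is the gradient vector field of the discrete Morse function $f_2$, it admits no closed $V_2$-loops; together with the finiteness of $K$ this forces the trajectory to terminate at a $V_2$-critical $0$-simplex. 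Moreover, each interior vertex $v_i$ is paired with some $\beta_i\neq \beta_{i-1}$ (since $\beta_{i-1}$ is already matched with $v_{i-1}$), so no $v_i$ along the interior of the trajectory can itself be $V_2$-critical. Hence there is exactly one $v_j\in Cr_0(f_2)$ reached by a $V_2$-gradient path from $\tilde{v}_1$, and exactly one such path $\gamma$; it contributes $m(\gamma)=\pm 1$. This yields $h_0(\tilde{v}_1)=n^{f_2}(\tilde{v}_1,v_j)\,v_j$ with $n^{f_2}(\tilde{v}_1,v_j)=\pm 1$, which is precisely weak faithfulness.

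The only delicate point is ruling out shorter truncated paths that could introduce additional nonzero summands into $h_0(\tilde{v}_1)$. This is handled by the observation above that every proper prefix of the trajectory ends at a $V_2$-matched (hence non-critical) vertex, so no prefix terminates at an element of $Cr_0(f_2)$. With this in place the uniqueness argument completes the proof.
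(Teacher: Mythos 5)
Your proof is correct and follows essentially the same route as the paper's: the paper likewise reduces the claim to the existence of a unique $f_2$-critical vertex reached from each $f_1$-critical vertex, which it attributes to ``the definition of discrete Morse function.'' You simply make explicit the two points the paper leaves implicit --- that the extra faithfulness condition is vacuous at $q=0$ since $Cr_{-1}$ is empty, and that the uniqueness comes from the determinism, acyclicity, and termination of the $V_2$-trajectory starting at a vertex.
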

\begin{proof}
By the definition of discrete Morse function,
it is easy to see that
for any  vertex $v_1 \in Cr_0(f_1)$,
there exists  a unique corresponding vertex 
$v_2 \in Cr_0(f_2)$ such that 
$v_1$ is connected to $v_2$,
implying
$h_0(v_1)= \pm v_2$.
Therefore, $h_0$ is faithful.
\end{proof}

The above proof  also demonstrates that
$h_0$ is injective.
Furthermore,
under the consideration of coefficient $\mathbb{Z}_2$,
$h_0$ can be 
regarded as  a 
well-defined mapping 
between the  sets  of critical vertices
$Cr_0(f_1)$ and $Cr_0(f_2)$.

\begin{proposition}
\label{faithful implies chain map}
Let $f_1, f_2: K \to \mathbb{R}$
be discrete Morse functions
and $h_q: C_q^{f_1}(K) \to C_q^{f_2}(K) $
be the connectedness homomorphisms.
If $h$ is faithful, 
then $h$ is a chain map. 
\end{proposition}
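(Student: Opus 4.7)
The plan is to apply the chain map characterization established earlier in this section (the equivalent condition in terms of connectedness coefficients) and show that the faithfulness hypothesis turns that condition into an immediate consequence of the defining identity of Definition \ref{faithful}. Fix $q \geq 1$ and $\tilde{\sigma}_1 \in Cr_q(f_1)$. By faithfulness of $h$, there exists $\tilde{\sigma}_2 \in Cr_q(f_2)$ with $\tilde{\sigma}_1$ faithful to $\tilde{\sigma}_2$. First I would use the weak faithfulness part, namely $h_q(\tilde{\sigma}_1) = n^{f_1}(\tilde{\sigma}_1, \tilde{\sigma}_2)\tilde{\sigma}_2$, to collapse the outer sum in
$$\partial^{f_2}_q \circ h_q(\tilde{\sigma}_1) = \sum_{\sigma_j \in Cr_q(f_2)} n^{f_1}(\tilde{\sigma}_1,\sigma_j) \sum_{\alpha_i \in Cr_{q-1}(f_2)} n^{f_2}(\sigma_j, \alpha_i)\,\alpha_i$$
to the single surviving term at $\sigma_j = \tilde{\sigma}_2$, yielding $\sum_{\alpha_i \in Cr_{q-1}(f_2)} n^{f_1}(\tilde{\sigma}_1, \tilde{\sigma}_2)\, n^{f_2}(\tilde{\sigma}_2, \alpha_i)\,\alpha_i$.

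Next I would observe that this simplified expression is exactly the right-hand side of the faithfulness equation in Definition \ref{faithful}, up to the relabeling $\alpha_2 \mapsto \alpha_i$. On the other side of the chain map identity, the expansion $h_{q-1}\circ \partial^{f_1}_q(\tilde{\sigma}_1) = \sum_{\alpha_\ell \in Cr_{q-1}(f_1)} n^{f_1}(\tilde{\sigma}_1, \alpha_\ell) \sum_{\alpha_i \in Cr_{q-1}(f_2)} n^{f_1}(\alpha_\ell, \alpha_i)\,\alpha_i$ coincides with the left-hand side of the faithfulness equation after the analogous relabeling $\alpha_1 \mapsto \alpha_\ell$, $\alpha_2 \mapsto \alpha_i$. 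Thus the defining equation of faithfulness gives $\partial^{f_2}_q \circ h_q(\tilde{\sigma}_1) = h_{q-1}\circ \partial^{f_1}_q(\tilde{\sigma}_1)$ on each generator; extending linearly over $C_q^{f_1}$ yields the chain map identity at level $q$, and letting $q$ range from $1$ up to $\dim K$ completes the proof.

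The only subtlety, and the sole potential obstacle, is at $q=1$, where $h_0$ is defined using the coefficients $n^{f_2}$ rather than $n^{f_1}$. However, inspecting the $q=1$ version of the chain map characterization shows that the relevant coefficients on the right-hand side are precisely of the form $n^{f_2}(\alpha_\ell, \alpha_i)$ that appear in the definition of $h_0$, so the same matching with the faithfulness equation works verbatim. Beyond careful bookkeeping of the superscripts $f_1$ and $f_2$ attached to the various $n$-coefficients across $h_q$, $h_{q-1}$, $\partial^{f_1}$ and $\partial^{f_2}$, the proof requires no new idea and is essentially a direct substitution.
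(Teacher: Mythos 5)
Your proposal is correct and follows essentially the same route as the paper's proof: expand $\partial_q^{f_2}\circ h_q(\tilde{\sigma}_1)$ on a generator, collapse it via weak faithfulness to the single term at $\tilde{\sigma}_2$, expand $h_{q-1}\circ\partial_q^{f_1}(\tilde{\sigma}_1)$, and recognize the two sides as exactly the two sides of the defining equation in Definition \ref{faithful}. Your extra remark about the $q=1$ case, where $h_0$ is built from $n^{f_2}$-coefficients, is a point the paper's proof leaves implicit, but it does not change the argument.
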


\begin{proof}
For any 
$\tilde{\sigma}_1\in Cr_q(f_1)$,
suppose that $\tilde{\sigma}_1$ is faithful to $\tilde{\sigma}_2$.
We compute that 
$$\partial_{q}^{f_2} \circ h_q (\tilde{\sigma}_1) =\sum_{\alpha_2 \in  Cr_{q-1} (f_2)} n^{f_1}( \tilde{\sigma}_1, \tilde{\sigma}_2) 
n^{f_2} (\tilde{\sigma_2} ,  \alpha_2)  \alpha_2. $$
Also, 
$$h_{q-1}\circ \partial_{q}^{f_1}(\tilde{\sigma}_1)=
\sum_{\alpha_1 \in Cr_{q-1}(f_1)} n^{f_1} (\tilde{\sigma}_1, \alpha_1  )  \sum_{\alpha_2 \in  Cr_{q-1} (f_2)} n^{f_1}( \alpha_1 , \alpha_2) \alpha_2.$$

Therefore, by Definition \ref{faithful},
$h$ is a chain map.

\end{proof}

\subsection{The Connectedness Homomorphism of Optimal Discrete Morse functions }
The concept of  faithfulness is introduced
to establish a local one-to-one correspondence between
two gradient vector fields similar to 
the optimal discrete Morse function case.
Recall that,
a discrete Morse function $f: K \to \mathbb{R}$
is called \textit{optimal}
if for any $q\in \mathbb{N}$,
$\# Cr_q(f)=\beta_q(K)$ \cite{Discrete Morse theory on graphs}.

\begin{lemma}
\label{optimal then wf}
Let $f_1, f_2 : K\to \mathbb{R}$ 
be optimal discrete Morse functions.
For any $\tilde{\sigma}_1 \in Cr_q(f_1)$,
there exists a unique $\tilde{\sigma}_2\in Cr_q(f_2)$
such that 
$\tilde{\sigma}_1$ and $\tilde{\sigma}_2$
are weakly faithful to each other.

\end{lemma}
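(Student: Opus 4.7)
The setup observation is that optimality of $f_1$ and $f_2$ forces $\partial^{f_1}_q=0$ and $\partial^{f_2}_q=0$ for every $q$, since the discrete Morse inequalities together with $\#Cr_q(f_i)=\beta_q(K)$ leave no room for nonzero boundaries. In particular each critical simplex $\tilde\sigma_i$ is already a Morse cycle, and closing it up along the associated gradient flow produces a well-defined simplicial $q$-cycle $Z_i$ in $K$ whose coefficients on the $f_i$-non-critical $q$-simplices are precisely the connectedness coefficients $n^{f_i}(\tilde\sigma_i,\,\cdot\,)$. This geometric picture is what I would use to translate the combinatorial claim into a statement about how the two cycle-bases $\{Z_1^j\}$ and $\{Z_2^k\}$ interact.

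My plan proceeds in three steps. First, I would establish existence of an $f_2$-critical target: since $\tilde\sigma_1$ represents a nontrivial free summand generator of $H_q(K)$, the cycle $Z_1$ is not null-homologous; because $f_2$ is optimal, the $f_2$-critical $q$-simplices span $H_q(K)$, so at least one of them must carry a nonzero coefficient in $[Z_1]$. Extracting such a simplex gives $\tilde\sigma_2\in Cr_q(f_2)$ with $n^{f_1}(\tilde\sigma_1,\tilde\sigma_2)\neq 0$, hence $h_q(\tilde\sigma_1)\neq 0$. Second, I have to show that the support of $h_q(\tilde\sigma_1)$ in $Cr_q(f_2)$ consists of exactly one element; this is the content of weak faithfulness, and once it is in hand uniqueness of $\tilde\sigma_2$ follows automatically from single-element support. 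Third, I would obtain the reverse direction $g_q(\tilde\sigma_2)=c\,\tilde\sigma_1$ by running the same argument with the roles of $(f_1,V_1)$ and $(f_2,V_2)$ interchanged, producing some $\tilde\sigma_1'\in Cr_q(f_1)$ with $\tilde\sigma_2$ weakly faithful to $\tilde\sigma_1'$; a compatibility check between $h_q$ and $g_q$ then forces $\tilde\sigma_1'=\tilde\sigma_1$, giving the ``weakly faithful to each other'' condition.

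The hard part will be the second step, i.e.\ proving that the support of $h_q(\tilde\sigma_1)$ in $Cr_q(f_2)$ is a singleton. My plan to attack this is to combine Lemma 3.3 with a dimension count. Lemma 3.3 constrains the position of any $\sigma_j\in Cr_q(f_2)$ reached by multiple $V_1$-paths from $\tilde\sigma_1$, forcing it to lie outside any simplicial cycle formed inside $Z_1$; coupled with $\#Cr_q(f_1)=\#Cr_q(f_2)=\beta_q(K)$ and the fact that in the optimal setting $h_q$ necessarily induces an isomorphism on homology (both sides being canonically identified with $H_q(K)$), a pigeonhole-type argument should force $h_q$ to realize a signed one-to-one correspondence between $Cr_q(f_1)$ and $Cr_q(f_2)$. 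The genuinely delicate scenario I expect to wrestle with is the case where $Z_1$ weaves through several $V_2$-matched pairs and multiple $f_2$-critical simplices appear in $h_q(\tilde\sigma_1)$ in a way that only cancels modulo boundaries. Since $C_\ast^{f_2}$ here has zero differential, any such cancellation must already occur at the chain level, and making this rigidity precise—ruling out ``diagonal'' combinations of the form $[\tilde\sigma_1]=a[\tilde\sigma_2]+b[\tilde\sigma_2']$ with $a,b\neq 0$—is the crux of the argument.
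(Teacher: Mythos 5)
Your existence step is essentially the paper's: both arguments observe that the $V_1$-gradient flow out of $\tilde{\sigma}_1$ sweeps out a simplicial $q$-cycle $Z_1$ representing a nontrivial class, which must meet $Cr_q(f_2)$ because every representative cycle contains at least one critical simplex of any discrete Morse function (the homology isomorphism plus the Morse inequalities), and your third step (symmetry in $f_1$ and $f_2$) is also exactly what the paper does. The genuine gap is your second step, and you have correctly located it but not closed it: you never prove that the support of $h_q(\tilde{\sigma}_1)$ in $Cr_q(f_2)$ is a singleton; you only announce a plan. Worse, the plan as described would not work, because it is pitched at the level of homology classes. When $\beta_q(K)\geq 2$, the class $[Z_1]$ can perfectly well equal $a[\tilde{\sigma}_2]+b[\tilde{\sigma}_2']$ with $a,b\neq 0$ in the basis furnished by $Cr_q(f_2)$; neither the fact that $h_q$ induces an isomorphism on homology nor the pigeonhole count $\#Cr_q(f_1)=\#Cr_q(f_2)=\beta_q(K)$ rules this out, since a base change between two bases of $H_q(K)$ need not be diagonal. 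The cycle lemma you invoke (that a target simplex reached by several gradient paths does not participate in the cycles those paths form) constrains the local position of one target simplex; it says nothing about how many $f_2$-critical simplices the cycle $Z_1$ passes through, which is what you need.

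What the paper actually does at the crux is argue about the support of the specific chain $Z_1$, not its homology class: since $f_1$ is optimal, the flow out of $\tilde{\sigma}_1$ covers the cycle $Z_1$, reaching each $q$-simplex in it exactly once, and the paper then asserts that this particular cycle contains a unique $f_2$-critical $q$-simplex because $f_2$ is optimal; weak faithfulness with coefficient $\pm 1$ follows, and the converse direction is obtained by the parallel argument for $g_q(\tilde{\sigma}_2)$. The decisive input is thus a geometric statement about one concrete cycle in $K$ meeting $Cr_q(f_2)$ in exactly one simplex, a statement invisible to the homological bookkeeping you propose. Until you supply this (or an equivalent chain-level rigidity argument ruling out the diagonal combinations you yourself flag), the proof is incomplete.
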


\begin{proof}
First,
we note that 
any $q$-dimensional representative chain  (of simplicial homology)
  consists of
at least one $q$-dimensional 
critical simplex.
This fact simply follows
from the isomorphism 
between discrete Morse 
homology and simplicial
homology,
and the discrete Morse inequalities.

Since $f_1$ is optimal,
$\tilde{\sigma}_1$ corresponds to 
a $q$-dimensional homology class $[h]$.
The $f_1$-gradient flows starting from 
$\tilde{\sigma}_1$ cover the whole cycle and flow to  
every $q$-dimensional simplex in the cycle only once.
Additionally, there exists a unique 
$\tilde{\sigma}_2 \in Cr_q(f_2)$
in the  cycle, because $f_2$ is optimal.
Thus, 
$$h_q(\tilde{\sigma}_1)=n^{f_1}(\tilde{\sigma}_1,\tilde{\sigma}_2) \tilde{\sigma}_2=\pm\tilde{\sigma}_1.$$

A parallel discussion to $g_q(\tilde{\sigma}_2)$
completes the proof.

\end{proof}

\begin{proposition}
If $f_1, f_2 : K\to \mathbb{R}$ 
are optimal discrete Morse functions,
then $h$ and $g$ are faithful homomorphisms.
\end{proposition}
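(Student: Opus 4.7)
The plan is to reduce faithfulness to its two defining clauses and observe that both are trivially satisfied once optimality is invoked. Fix $q\geq 1$ and $\tilde{\sigma}_1\in Cr_q(f_1)$. Lemma~\ref{optimal then wf} supplies a unique $\tilde{\sigma}_2\in Cr_q(f_2)$ such that $\tilde{\sigma}_1$ and $\tilde{\sigma}_2$ are weakly faithful to each other, so the first clause of Definition~\ref{faithful} is already in hand. A direct expansion of the displayed equation in Definition~\ref{faithful} shows that, given $h_q(\tilde{\sigma}_1)=n^{f_1}(\tilde{\sigma}_1,\tilde{\sigma}_2)\tilde{\sigma}_2$, it is equivalent to the chain-map identity $h_{q-1}\circ\partial_q^{f_1}(\tilde{\sigma}_1)=\partial_q^{f_2}\circ h_q(\tilde{\sigma}_1)$ evaluated at $\tilde{\sigma}_1$.

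The central step is then to prove that every Morse boundary operator vanishes under optimality: if $f:K\to\mathbb{R}$ is optimal then $\partial_q^f\equiv 0$ for all $q$. For this, tensor the Morse complex with $\mathbb{Q}$; since $\#Cr_q(f)=\beta_q(K)$, the $\mathbb{Q}$-chain complex has dimension $\beta_q(K)$ in each degree, whereas discrete Morse homology recovers $H_q(K;\mathbb{Q})$, also of dimension $\beta_q(K)$. A simple rank count forces $\partial_q^f\otimes\mathbb{Q}$ to have rank $0$, and because the $\mathbb{Z}$-rank of an integer matrix equals its $\mathbb{Q}$-rank, the matrix itself is already zero over $\mathbb{Z}$.

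Applying the vanishing claim to both $f_1$ and $f_2$ simultaneously, every coefficient $n^{f_1}(\tilde{\sigma}_1,\alpha_1)$ with $\alpha_1\in Cr_{q-1}(f_1)$ and every $n^{f_2}(\tilde{\sigma}_2,\alpha_2)$ with $\alpha_2\in Cr_{q-1}(f_2)$ is zero, so both sides of the displayed equation in Definition~\ref{faithful} collapse to $0$. Hence $\tilde{\sigma}_1$ is faithful to $\tilde{\sigma}_2$, and letting $\tilde{\sigma}_1$ and $q$ vary shows that $h_q$ is faithful for every $q\geq 1$; together with the $q=0$ case already handled by the preceding proposition on $h_0$, this proves $h$ is a faithful homomorphism. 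Swapping the roles of $f_1$ and $f_2$ yields the same conclusion for $g$.

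The principal subtlety is the vanishing of $\partial^f$ for optimal $f$: while the rank computation over $\mathbb{Q}$ is painless, passing back to integer coefficients tacitly uses that $K$ has torsion-free integer homology, since otherwise the equality $\#Cr_q(f)=\beta_q(K)$ cannot be realised by any discrete Morse function. Once this is recognised, the rest of the argument is purely formal bookkeeping and no further interaction between the two gradient vector fields is required.
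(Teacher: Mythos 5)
Your proposal is correct and follows essentially the same route as the paper: weak faithfulness comes from the lemma on optimal functions, and the displayed identity in the definition of faithfulness collapses because optimality forces every Morse boundary operator to vanish. The only difference is that you spell out the rank-count justification for $\partial^f\equiv 0$ (and the attendant torsion-freeness of $H_*(K;\mathbb{Z})$), which the paper simply asserts.
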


\begin{proof}
When $f_1$ is optimal,
for any $\tilde{\sigma}_1 \in Cr_q(f_1)$,
the Morse boundary 
$$\partial^{f_1}_q (\tilde{\sigma}_1)= \sum_{\alpha_i \in Cr_{q-1}(f_1)} n^{f_1} (\tilde{\sigma}_1, \alpha_i) \alpha_i=0.$$
Thus,
the coefficient 
$n^{f_1} (\tilde{\sigma}_1, \alpha_i)=0$, for each 
$\alpha_i \in Cr_{q-1}(f_1)$.

According to Lemma \ref{optimal then wf},
there exists 
$\tilde{\sigma}_2 \in Cr_q(f_2)$ such that 
$\tilde{\sigma}_1$ is weakly faithful to $\tilde{\sigma}_2$.
Thus, 
we compute 
$\partial_q^{f_2} \circ h_q (\tilde{\sigma}_1)= h_{q-1} \circ \partial_q^{f_1} (\tilde{\sigma}_1)=0.$

Therefore, 
$h$ is a faithful homomorphism.
\end{proof}

\begin{corollary}
Let $f_1$ and $f_2$ be optimal discrete Morse functions.
The following statements hold true:
\begin{enumerate}
\item  $h$ and $g$ are chain maps;

\item  $h\circ g=g\circ h=\id$.

\end{enumerate}

\end{corollary}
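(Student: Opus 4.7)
The plan is to derive both claims almost directly from the results already established in this subsection. For (1), the observation is that the immediately preceding Proposition shows that when $f_1$ and $f_2$ are both optimal, the connectedness homomorphisms $h$ and $g$ are faithful; Proposition \ref{faithful implies chain map} then promotes faithfulness to the chain-map property. Thus (1) requires no fresh work beyond invoking these two results in sequence.

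For (2), I would verify $g \circ h = \id$ on generators (the symmetric argument gives $h \circ g = \id$). Fix $\tilde{\sigma}_1 \in Cr_q(f_1)$. By Lemma \ref{optimal then wf} there is a unique $\tilde{\sigma}_2 \in Cr_q(f_2)$ weakly faithful to $\tilde{\sigma}_1$, and symmetrically $\tilde{\sigma}_2$ is weakly faithful back to $\tilde{\sigma}_1$. Writing out the definitions of $h_q$ and $g_q$ and using the fact that no other critical simplex receives a flow from $\tilde{\sigma}_1$ (respectively $\tilde{\sigma}_2$), I obtain
$$g_q \circ h_q(\tilde{\sigma}_1) \;=\; n^{f_1}(\tilde{\sigma}_1, \tilde{\sigma}_2)\, n^{f_2}(\tilde{\sigma}_2, \tilde{\sigma}_1)\, \tilde{\sigma}_1.$$
It therefore remains to show that this scalar equals $+1$.

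For the sign, I would argue as in the proof of Lemma \ref{optimal then wf}: since both $f_1$ and $f_2$ are optimal, there is a unique $q$-dimensional simplicial cycle $\Gamma$ representing the homology class associated to $\tilde{\sigma}_1$ (equivalently $\tilde{\sigma}_2$), and the $V_1$-gradient path from $\tilde{\sigma}_1$ to $\tilde{\sigma}_2$ concatenated with the $V_2$-gradient path from $\tilde{\sigma}_2$ back to $\tilde{\sigma}_1$ traverses $\Gamma$ exactly once. Since each $m(\gamma)$ is a product of incidence numbers along its path, the product $n^{f_1}(\tilde{\sigma}_1, \tilde{\sigma}_2)\, n^{f_2}(\tilde{\sigma}_2, \tilde{\sigma}_1)$ equals the signed product of incidence numbers around the oriented closed cycle $\Gamma$, which is forced to be $+1$ by the defining compatibility of orientations along a simplicial cycle.

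I expect the main obstacle to be this sign verification rather than the algebraic bookkeeping: over $\mathbb{Z}_2$ the statement is immediate because every nonzero coefficient equals $1$, but over $\mathbb{Z}$ one must carefully match the incidence-number contributions of the two concatenated gradient paths with the chosen orientation of $\Gamma$. This requires a clean geometric description of how a $V_i$-gradient path decomposes $\Gamma$, which is implicit in the proof of Lemma \ref{optimal then wf} but deserves explicit justification; once this is in place, both parts of the corollary follow.
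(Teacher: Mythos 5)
Your proposal is correct and is essentially the argument the paper intends: the paper states this corollary without any proof, and part (1) is exactly the two-step citation you give (optimality implies faithfulness, and Proposition \ref{faithful implies chain map} implies the chain-map property), while part (2) is the generator computation via Lemma \ref{optimal then wf}. The only substantive content beyond citation is the sign $n^{f_1}(\tilde{\sigma}_1,\tilde{\sigma}_2)\,n^{f_2}(\tilde{\sigma}_2,\tilde{\sigma}_1)=+1$ over $\mathbb{Z}$, which the paper glosses over entirely; you correctly isolate it as the crux, and your cycle-orientation sketch (or, alternatively, noting that $h$ and $g$ are chain maps between complexes with zero differentials inducing the identity on homology) is an adequate way to close it.
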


The connectedness homomorphisms between
optimal functions are
particular cases of faithfulness connectedness homomorphisms.
Generally,
if $\tilde{\sigma}_1$ and $\tilde{\sigma}_2$
are faithful to each other, then
the connectedness coefficients
$n^{f_1}(\tilde{\sigma}_1,\tilde{\alpha}_1)$
and 
$n^{f_2}(\tilde{\sigma}_2,\tilde{\alpha}_2)$
are 
$0$, $1$ or $k>1$.
When 
discrete Morse functions 
$f_1$, $f_2$ are optimal,
every 
connectedness coefficient
$n^{f_1}(\tilde{\sigma}_1,\tilde{\alpha}_1)$ equals $0$.


\section{Modification of the Discrete Morse complex near a birth-death
point via the connectedness homomorphism}

While discrete 
Morse theory and
 ordinary Morse theory 
 share many analogous 
 concepts, the definition 
 of a critical simplex within 
 discrete Morse theory presents 
 unique challenges, especially 
 in the context of depicting a ``degenerate critical simplex".
In ordinary Morse theory,
 the analysis of a degenerate critical point
often
  involves  a \textit{perturbation} strategy. 
A perturbation strategy aims to
 split the degenerate critical point
  into a pair of distinct, 
  non-degenerate critical points,
  thus facilitating further investigation.
  We seek for a discrete analogy.
  
  \subsection{Birth-Death Transition of Discrete Morse Complexes}

Now,
 let 
$f_1, f_2$ be two discrete Morse 
functions
different from a
pair of 
birth-death point.
We then
 apply the connectedness homomorphisms between
$\{C_{\ast}^{f_1}(K), \partial_{\ast}^{f_1} \}$ and 
$\{C_{\ast}^{f_2}(K), \partial_{\ast}^{f_2} \}$
 to 
 feature a discrete version of the  degeneration of critical points in 
the smooth case.
This is a fundamental yet natural
scenario  to explore 
before we consider about
the elementary 
cobordism of
discrete Morse theory.

 We initiate our exploration by 
 defining the context in terms of discrete Morse theory. 
 This involves 
assuming  that the 
chain complexes
$\{C_{\ast}^{f_1}(K), \partial_{\ast}^{f_1} \}$ and 
$\{C_{\ast}^{f_2}(K), \partial_{\ast}^{f_2} \}$ satisfy the 
following assumptions:

We fix a dimension $i$.
 Regarding to the number of $i$-dimensional simplices, 
we
assume 
$$\#Cr_i(f_1)= \# Cr_i(f_2) -1; $$
$$\#Cr_{i-1}(f_1)=\# Cr_{i-1}(f_2) -1.$$
For dimension $q \neq i, i-1$, 
we   
assume that 
$$\#Cr_q(f_1)= \#Cr_q(f_2).$$
We denote the two redundant
simplices 
$\tilde{\sigma}\in Cr_i(f_2)$
and
$\tilde{\alpha}\in Cr_{i-1}(f_2)$,
and suppose that 
$$n^{f_2}(\tilde{\sigma},\tilde{\alpha})=k,$$
where $k\in \mathbb{Z}$.
 
Regarding to the connectedness homomorphisms,
we assume that 
for any  $\delta_1\in Cr_q(f_1)$, 
$$h_q(\delta_1)= 
\begin{cases}
\delta_1,  & \text{ if }q\neq i;\\
\delta_1+n^{f_1}(\delta_1, \tilde{\sigma}) \tilde{\sigma}, &\text{ if } q=i.

\end{cases}$$

On the other hand,
for any  $f_2$-critical simplex $\delta_2$, 
we let
$$g_q(\delta_2)= 
\begin{cases}
\delta_2, & \text{ if }\delta_2 \neq \tilde{\sigma}, \tilde{\alpha};\\
0, &\text{ if } \delta_2=\tilde{\sigma};\\
\displaystyle -\sum_{\alpha\in Cr_{i-1}(f_1)} n^{f_2} (\tilde{\alpha}, \alpha) \alpha, & \text{ if }  \delta_2=\tilde{\alpha}.

\end{cases}$$

  Regarding to the boundary homomorphisms,
we assume the following equations (1) to (5) 
hold.

Namely, when  $ q\neq i, i+1   \text{ and } 
\delta\neq \tilde{\alpha},$
\begin{equation}\label{1}
\partial^{f_1}_q (\delta)= \partial^{f_2}_q (\delta).
\end{equation}

For any  $\tau \in Cr_{i+1}(f_2)$,
\begin{equation} \label{2}
\partial_{i+1}^{f_2}(\tau )=  \partial_{i+1}^{f_1}(\tau ) + \sum_{\sigma \in Cr_{i}(f_1)} n^{f_1} (\tau, \sigma)\cdot n^{f_1}(\sigma, \tilde{\sigma})  \tilde{\sigma}.
\end{equation}
 For any $\sigma \in Cr_{i}(f_2)$ and $\sigma \neq \tilde{\sigma}$,
 \begin{equation} \label{3}
\partial_{i}^{f_2}(\sigma )=\partial_i^{f_1}(\sigma) - n^{f_1}(\sigma, \tilde{\sigma})   (k \tilde{\alpha} + \sum_{\alpha \in Cr_{i-1}(f_1)   }    k \cdot n^{f_2} (\tilde{\alpha}, \alpha   )  \alpha     ).
\end{equation}

 \begin{equation} \label{4}
 \partial_{i}^{f_2}(\tilde{\sigma })=  k \tilde{\alpha}  +   \sum_{\alpha \in Cr_{i-1}(f_1)}  k\cdot n^{f_2} (\tilde{\alpha},  \alpha) \alpha  .
\end{equation}

 \begin{equation} \label{5}
  \partial_{i-1}^{f_2}(\tilde{\alpha })= \sum_{\alpha \in Cr_{i-1}(f_2)} n^{f_2} (\tilde{\alpha}, \alpha)   \sum_{\omega \in Cr_{i-2} (f_1) }n^{f_1} (\alpha, \omega) \omega.
 \end{equation}

We can check that the above assumptions 
occur when we cancel the pair of critical 
simplices $(\tilde{\sigma}, \tilde{\alpha})$.
\begin{definition}\label{birth-death transition}
Under the conditions specified, 
we call \textit{$h$ a birth transition}
and \textit{$g$ a death transition}.
Particularly, 
when $k=1$, $h$ and $g$ are called  \textit{the birth transition of cusp type}
and \textit{the death transition of cusp type},
respectively.
\end{definition}

Note that in smooth case, $g$ corresponds to the 
\textit{cusp degeneration} 
in Cerf theory,
where neighboring two critical points are cancelled at a cusp \cite{cerf, Arnold}.
See also Section 4.3.

\begin{theorem}
\label{A-degeneration}
Let $f_1, f_2: K\to \mathbb{R}$ be discrete Morse functions, and
$h$ and $g$ be the corresponding connectedness homomorphisms. 
If $h$ and $g$ are a  birth and death transition, respectively, then both
$h$ and $g$ are chain maps.
\end{theorem}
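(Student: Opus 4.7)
The plan is to verify, by case analysis on the dimension $q$, the two chain-map identities
\[
\partial^{f_2}_q \circ h_q \;=\; h_{q-1}\circ \partial^{f_1}_q, \qquad \partial^{f_1}_q \circ g_q \;=\; g_{q-1}\circ \partial^{f_2}_q,
\]
by evaluating both sides on each critical generator and substituting the explicit formulas from Definition \ref{birth-death transition} together with equations (1)--(5). Since $h_q$ is the identity whenever $q\neq i$, and $g_q$ is the identity on every $f_2$-critical simplex except $\tilde{\sigma}$ (in dimension $i$) and $\tilde{\alpha}$ (in dimension $i-1$), the identities are automatic for $q\notin\{i-1,i,i+1\}$: both sides reduce to $\partial^{f_1}_q=\partial^{f_2}_q$ via (1). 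I would then work through the distinguished dimensions in turn.

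For $h$ at $q=i+1$, I would evaluate on $\tau\in Cr_{i+1}(f_1)$, where $h_{i+1}(\tau)=\tau$ while $h_i$ inserts a $\tilde{\sigma}$-correction on each $\sigma\in Cr_i(f_1)$; the identity then reduces exactly to (2). For $h$ at $q=i$, I would expand $\partial^{f_2}_i h_i(\sigma)= \partial^{f_2}_i(\sigma)+ n^{f_1}(\sigma,\tilde{\sigma})\,\partial^{f_2}_i(\tilde{\sigma})$ and substitute (3) and (4); the $k\tilde{\alpha}$- and $k\,n^{f_2}(\tilde{\alpha},\alpha)\alpha$-terms cancel, leaving $\partial^{f_1}_i(\sigma)= h_{i-1}\partial^{f_1}_i(\sigma)$. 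For $g$ at $q=i+1$, the correction in (2) is a multiple of $\tilde{\sigma}$, which $g_i$ annihilates by definition, so both sides collapse to $\partial^{f_1}_{i+1}(\tau)$. For $g$ at $q=i$, I would split on whether the generator is $\tilde{\sigma}$ (both sides vanish once one expands $\partial^{f_2}_i(\tilde{\sigma})$ by (4) and applies the defining formula $g_{i-1}(\tilde{\alpha})=-\sum n^{f_2}(\tilde{\alpha},\alpha)\alpha$) or not (substitute (3), and observe that the $\tilde{\alpha}$-correction is exactly compensated by $g_{i-1}(\tilde{\alpha})$).

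The subtlest step, which I expect to be the main obstacle, is $g$ at $q=i-1$ on the generator $\tilde{\alpha}$. Here one must compare
\[
\partial^{f_1}_{i-1}\bigl(g_{i-1}(\tilde{\alpha})\bigr) \;=\; -\sum_{\alpha\in Cr_{i-1}(f_1)} n^{f_2}(\tilde{\alpha},\alpha)\,\partial^{f_1}_{i-1}(\alpha)
\]
with $g_{i-2}\,\partial^{f_2}_{i-1}(\tilde{\alpha})$, the latter prescribed by (5). Using (1) to replace each $\partial^{f_1}_{i-1}(\alpha)$ by $\partial^{f_2}_{i-1}(\alpha)$ on the left, the desired equality is forced by applying $\partial^{f_2}_{i-1}$ to (4) and invoking $\partial^{f_2}\circ\partial^{f_2}=0$. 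Thus the compatibility between (4) and (5)---which is exactly what makes the birth--death data internally consistent as a chain complex---is what closes the diagram at this last spot. Once this is handled, all pieces of the bookkeeping assemble into the two chain-map identities and the theorem follows.
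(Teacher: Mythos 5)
Your proposal is correct and follows essentially the same route as the paper: a dimension-by-dimension verification of $\partial\circ h=h\circ\partial$ by substituting the defining formulas for $h_q$, $g_q$ and the assumed equations (1)--(5), with the cancellation of the $k\tilde{\alpha}$- and $k\,n^{f_2}(\tilde{\alpha},\alpha)\alpha$-terms at $q=i$ exactly as in the paper's computation. You in fact go further than the printed proof, which omits the $g$ half as ``parallel'': your handling of $g_{i-1}$ on $\tilde{\alpha}$ by applying $\partial^{f_2}_{i-1}$ to (4) and invoking $\partial^{f_2}\circ\partial^{f_2}=0$ is the right way to close that case (it implicitly uses $k\neq 0$, which is harmless for a genuine cancellation pair).
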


\begin{proof}

To prove $h$ is a chain map,  we need to show
for any  $\delta \in Cr_q(f_1)$,
\begin{equation}\label{h is a chain map}
\partial_q^{f_2} \circ h_q (\delta)= h_{q-1} \circ \partial_q^{f_1} (\delta). \tag{$*$}
\end{equation}
We note that
when $q\neq i-1, i$,
$\delta$ is also $f_2$-critical,
and $h_q(\delta)= g_q(\delta)= \delta.$
Thus, if $q\neq i-1,i,i+1$,
then by Equation \ref{1},
$\partial_q^{f_2} \circ h_q=\partial_q^{f_1}=  \id \circ \partial_q^{f_1} = h_{q-1} \circ \partial_q^{f_1}.$

When $q=i+1$, for any $\tau \in Cr_{i+1}(f_1)$,
by  Equations \ref{1} and \ref{2},
we see
\begin{align*}
\partial_{i+1}^{f_2} \circ h_q (\tau)&= \partial_{i+1}^{f_2}(\tau)= \sum_{\sigma\in Cr_{i}(f_1)} n^{f_1} (\tau, \sigma) \sigma + \sum_{\sigma\in Cr_{i}(f_1)} n^{f_1} (\tau, \sigma) \cdot n^{f_1} (\sigma, \tilde{\sigma}) \tilde{\sigma}.\\
&= h_i(\sum_{\sigma\in Cr_{i}(f_1)} n^{f_1} (\tau, \sigma) \sigma)=h_{i} \circ \partial_{i+1}^{f_1} (\tau).
\end{align*}

Also,
when $q=i$,  for
any $\sigma \in Cr_{i} (f_1)$,
Equations \ref{3} and \ref{4} imply 
\begin{align*}
\partial_i^{f_2} \circ h_i(\sigma)&=  \partial_i^{f_2}(\sigma + n^{f_1}(\sigma, \tilde{\sigma})  
\tilde{\sigma} )\\
&=\partial_i^{f_1}(\sigma) - n^{f_1}(\sigma, \tilde{\sigma})   (k \tilde{\alpha} + \sum_{\alpha \in Cr_{i-1}(f_1)   }    n^{f_2} (\tilde{\alpha}, \alpha   )  \alpha     ) \\
&+
n^{f_1}(\sigma, \tilde{\sigma})( k \tilde{\alpha}  +   \sum_{\alpha \in Cr_{i-1}(f_1)}  n^{f_2} (\tilde{\alpha},  \alpha) \alpha )\\
&=\partial_i^{f_1}(\sigma)=h_{i-1} \circ \partial^{f_1}_i(\sigma).
\end{align*}

Finally,
when $q=i-1$,
\ref{h is a chain map}
holds trivially
because 
$h_{i-1}$ and  $h_{i-2}$ are identities,
and 
$\partial^{f_1}_{i-1}=\partial^{f_2}_{i-1}$.
Therefore, 
$h$ is a chain map.

We omit the  parallel proof  applied to the connectedness homomorphism $g$
to prove that $g$ is a chain map.

\end{proof}

With regard to the birth-death transitions,
we remark the following statements.
\begin{itemize}

\item $g\circ h = \id.$ Also, $h\circ g(\delta_2) = \id$, 
for any $\delta_2 \neq \tilde{\sigma} $ or $\tilde{\alpha}$.

\item $h$ and $g$ are chain-homotopy inverses to each other, 
that is, 
$g\circ h$ and $h\circ g$ are chain homotopic to the identities.

\item In Definition 4.1, for convenience,
we assumed that any $f_2$-critical simplex $\delta_2$ distinct from 
$\tilde{\sigma} $ or $\tilde{\alpha}$ 
is also $f_1$-critical, implying  $h_q=g_q=\id_q$,
for $q\neq i$.
However, these identical simplices  can be  
extended to a pair of $f_1$ and $f_2$-critical simplices, 
which are faithful to each other.
See Example 2 in Section 4.4.
\end{itemize}

Next, we use the  connectedness homomorphisms
to define  the \textit{function connectedness} between 
two discrete Morse functions.

\begin{definition}
\label{connectedness}
We say  $f_1$ \textit{is  connected to} $f_2$
if both $h$ and $g$ are isomorphisms,
denoted $f_1 \leftrightarrow f_2$.

\end{definition}


Note that neither the simplex connectedness nor the
function connectedness  is an
equivalence relation
due to the absence of the transitive property.
To illustrate, 
let $f_1,f_2,f_3: K \to \mathbb{R}$ be discrete Morse functions and 
$\sigma_i \in Cr_q (f_i),$ $i= 1,2,3,$ 
represent critical simplices corresponding to each function. 
 The presence of  connectedness 
 $\sigma_1 \to \sigma_2$  and $\sigma_2 \to \sigma_3$ 
 does not necessarily imply 
 the strong connectedness $\sigma_1 \to \sigma_3$.
Alternatively, 
expressed in terms of the connectedness coefficient,
$n^{f_1}(\sigma_1, \sigma_3) \neq n^{f_1}(\sigma_1, \sigma_2) \cdot n^{f_2}(\sigma_2, \sigma_3).$

\begin{definition}
Let $f_1, f_2, \ldots,  f_n : K \to \mathbb{R}$
be a sequence of 
discrete Morse functions,
and $C_{\ast}^{f_1},C_{\ast}^{f_2} ,\ldots, C_{\ast}^{f_n}$
be  the corresponding 
discrete Morse complexes.
If for each $i= 1, 2, \cdots, n-1$, 
the connectedness homomorphisms 
$h^i_{\ast}: C_{\ast}^{f_i} \to C_{\ast}^{f_{i+1}}$
is either an isomorphism or a  
birth/death
 transition,
then we call the 
sequence 
of discrete Morse functions 
a \textit{birth-death transition sequence},
and any two 
discrete Morse functions $f_j$ and $f_k$
are called 
\textit{transitively connected},
denoted $f_j \Tsim  f_k$.
\end{definition}

\begin{proposition}
Transitive connectedness $\Tsim$ is an equivalence relation.
\end{proposition}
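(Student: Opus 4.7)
The plan is to verify the three defining properties of an equivalence relation—reflexivity, symmetry, and transitivity—directly from the definition of a birth-death transition sequence, using the structural data established in Definition 4.1 and Theorem \ref{A-degeneration}.

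For reflexivity, I would exhibit the trivial length-one sequence consisting of just $f$, or equivalently the length-two sequence $f, f$ whose connectedness homomorphism $h: C_\ast^f \to C_\ast^f$ is the identity (every critical simplex is trivially $\partial$-connected only to itself with coefficient $1$, by the convention $n^{f}(\tilde\sigma,\tilde\sigma)=1$ stated in the definition of $h_q$). Since the identity is an isomorphism, this sequence qualifies as a birth-death transition sequence.

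For symmetry, suppose $f_j \Tsim f_k$ via a sequence $f_{1}, \ldots, f_{n}$ with $f_j, f_k$ appearing in it. I would consider the reversed sequence $f_{n}, f_{n-1}, \ldots, f_{1}$ and show that each consecutive connectedness homomorphism in the reversed sequence is again either an isomorphism or a birth/death transition. If the original step $h^i: C_\ast^{f_i} \to C_\ast^{f_{i+1}}$ is an isomorphism, then $g^i: C_\ast^{f_{i+1}} \to C_\ast^{f_i}$ is its inverse (which is also an isomorphism, by the very definition of connectedness in Definition \ref{connectedness}, together with the remark that $h$ and $g$ are always defined symmetrically). If $h^i$ is a birth transition, then by Definition \ref{birth-death transition} the paired homomorphism $g^i$ going the other way is exactly a death transition, and conversely. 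Hence the reversed sequence is a birth-death transition sequence witnessing $f_k \Tsim f_j$.

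For transitivity, given $f_j \Tsim f_k$ via a sequence $f_{1}, \ldots, f_{m}$ (with $f_j = f_{a}$, $f_k = f_{b}$) and $f_k \Tsim f_\ell$ via a sequence $f'_{1}, \ldots, f'_{n}$ (with $f_k = f'_{c}$, $f_\ell = f'_{d}$), I would concatenate the relevant segments $f_{a}, f_{a+1}, \ldots, f_{b} = f'_{c}, f'_{c+1}, \ldots, f'_{d}$. Every consecutive pair in the concatenation is already an isomorphism or birth/death transition by hypothesis—no new compatibility condition is imposed at the junction $f_b = f'_c$, because the defining property of a birth-death transition sequence is a local, step-by-step condition on adjacent pairs only. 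This yields a birth-death transition sequence from $f_j$ to $f_\ell$, giving $f_j \Tsim f_\ell$.

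The only genuinely subtle point is the reversal argument in symmetry: one must confirm that birth and death transitions are mutually reverse and that the roles of $h$ and $g$ in Definition 4.1 are truly interchangeable (i.e., the formulas for $g$ given a birth $h$ are exactly the formulas that make $g$ a death transition with the roles of $f_1$ and $f_2$ swapped). This is essentially a bookkeeping check against Equations (1)--(5) and the formulas for $h_q$, $g_q$. Once that is in hand, the remaining verifications are immediate from the definition.
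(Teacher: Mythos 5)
The paper states this proposition without proof, so there is nothing to compare your argument against; judged on its own, your three-part plan (a trivial sequence for reflexivity, reversal for symmetry, concatenation at the common function for transitivity) is the natural one, and the reflexivity step and the observation that the junction of the concatenation imposes no new condition are both fine. The genuine gap is in the reversal step, and it is not where you locate it. The birth/death half is indeed only bookkeeping: Definition \ref{birth-death transition} defines $h$ and $g$ as a pair, so the reverse of a birth transition is by definition a death transition and vice versa. The problem is the isomorphism half. You assert that if $h^i\colon C_\ast^{f_i}\to C_\ast^{f_{i+1}}$ is an isomorphism then $g^i$ is its inverse ``by the very definition of connectedness in Definition \ref{connectedness}.'' That definition does not say this: it defines $f_1\leftrightarrow f_2$ by requiring \emph{both} $h$ and $g$ to be isomorphisms, precisely because neither condition implies the other. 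The maps $h$ and $g$ are built from the two different gradient vector fields $V_1$ and $V_2$ and are not formally inverse to one another; the example of Figure \ref{an example} already shows the two directions can behave very differently (there $g$ fails even to be a chain map). So ``the reverse of an isomorphism step is again admissible'' is an unproved claim, not a definition chase.

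This gap infects transitivity as well as symmetry. Your concatenation tacitly assumes that $f_j$ precedes $f_k$ in its sequence and that $f_k$ precedes $f_\ell$ in its sequence; in general one must traverse part of a sequence backwards to travel from $f_j$ to $f_k$ to $f_\ell$, so the entire proof reduces to the reversal lemma. Two honest ways to close the gap: either prove that $h^i$ being an isomorphism forces $g^i$ to be an isomorphism (which requires an argument the paper does not supply), or read ``isomorphism'' in the definition of a birth-death transition sequence as the function connectedness $f_i\leftrightarrow f_{i+1}$ of Definition \ref{connectedness}, i.e.\ require both directions to be isomorphisms. Under the latter reading every step of a birth-death transition sequence is reversible by fiat, and your argument goes through verbatim.
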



\subsection{Comparison of Degenerations in Smooth and Discrete Cases. }
In \cite{laudenbach}, 
Laudenbach  studies the modification of a smooth Morse complex
near a degenerated critical point.
Notably, 
under certain conditions, 
Laudenbach establishes a composition
 of homomorphisms 
 connecting two 
 smooth Morse complexes, 
 obtained by the Morse functions
 before and after the degeneration.

Figure \ref{discretization}
presents a diagram  
illustrating the  relationship between the smooth degeneration and the   connectedness homomorphisms $h$ and $g$.
Within this context, 
let $M$ be a smooth manifold, $\phi_1, \phi_2: M \to \mathbb{R}$ 
be Morse functions. 
The homomorphisms 
\circled{1} are constructed  in Laudenbach \cite[Section f]{laudenbach}
to represent the transformation before and after the degeneration;
and 
the discretization homomorphisms 
 \circled{2} are first derived by Gallais \cite[Theorem 3.1]{Gallais} and then improved by
 Benedetti \cite[Main Theorem A]{Benedetti}, as noted in
Proposition \ref{subdivision} below.
\begin{figure}[h]
\begin{center}

\begin{tikzcd}
S_q^{\phi_1}(M) \arrow[rrr, "\circled{1}"] \arrow[dd, "\circled{2}"]                &  &  & S_q^{\phi_2}(M) \arrow[dd,"\circled{2}"]  \arrow[lll, shift left=2]                                        \\
                                                   &  &  &                                                                  \\

C_q^{f_1}(T) \arrow[rrr, "h_q"] &  &  & C_q^{f_2}(T) \arrow[lll, "g_q", shift left=2]
\end{tikzcd}
\end{center}

\caption{Comparison of the smooth and discrete cases.}
\label{discretization}
\end{figure}

\begin{proposition}[Main Theorem A of Benedetti  \cite{Benedetti}]
\label{subdivision}
Let $M$ be a smooth manifold
and $\phi:M \to \mathbb{R}$ be a generic Morse function.
For any PL triangulation $T$ of $M$,
there exist an integer $r$ and a discrete Morse function $f$ on 
the $r$-th derived subdivision $T^r$ of $T$
such that  
the smooth Morse complex is isomorphic to the discrete Morse complex obtained
by
$T^r$ and $f$.
\end{proposition}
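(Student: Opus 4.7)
The plan is to translate the smooth Morse-theoretic data of $(M,\phi)$ into combinatorial data on a sufficiently fine derived subdivision of $T$, and then package that data as a discrete Morse function $f$ using Forman's framework. I would start from the fact that a generic (Morse--Smale) function $\phi$ determines a CW decomposition of $M$ by the unstable manifolds $W^u(p)$ of its critical points, whose attaching maps encode the smooth Morse complex $(S_\ast^{\phi}(M),\partial^{\phi}_\ast)$ via signed counts of gradient flow lines.

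First I would localize around each critical point $p$ of index $i$ using Morse charts, in which $\phi$ is a standard nondegenerate quadratic form. After taking $r$ derived subdivisions of $T$, with $r$ large enough that the mesh of $T^r$ is smaller than both the Morse-chart radii and the minimal transverse distance between incomparable unstable manifolds, each $\overline{W^u(p)}$ can be simplicially approximated by a subcomplex $K_p\subset T^r$ of dimension $i$. I would then pick a distinguished $i$-simplex $\sigma_p\in K_p$ lying inside the Morse chart at $p$; this will serve as the critical simplex paired with $p$.

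Next I would build the discrete gradient vector field $V$ on $T^r$. Inside each $K_p$, orient the matching so that the arrows of $V$ follow a PL approximation of $-\nabla\phi$ and collapse $K_p$ onto $\sigma_p$. On the complementary region $T^r\setminus\bigcup_p K_p$, apply the collapsibility arguments underlying Gallais's and Benedetti's constructions to extend $V$ without introducing extra critical simplices. The resulting $f$ then has exactly one critical $i$-simplex for each index-$i$ critical point of $\phi$, yielding a canonical identification $p\mapsto\sigma_p$ of chain groups $S_i^{\phi}(M)\cong C_i^f(T^r)$.

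The main obstacle, and the technical core of the argument, is matching the boundary operators: for critical points with $\mathrm{ind}(p)=\mathrm{ind}(q)+1$ I must show $n^f(\sigma_p,\sigma_q)=[\partial^{\phi}p:q]$. This requires a sign-respecting bijection between the smooth flow lines from $p$ to $q$ (the components of $W^u(p)\cap W^s(q)$) and the discrete $V$-gradient paths from $\sigma_p$ to $\sigma_q$. Existence of the bijection follows, possibly after further subdivision, from each flow line being captured by a unique simplicial strip in $T^r$. Matching signs reduces to a local calculation in the Morse charts comparing the induced orientation of $\overline{W^u(q)}\subset\overline{W^u(p)}$ with the product of incidence numbers along the corresponding discrete gradient path; keeping this comparison consistent through all the subdivisions is the delicate bookkeeping that distinguishes Benedetti's refinement from Gallais's original construction.
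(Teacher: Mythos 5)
You should first note that the paper does not prove this statement at all: it is quoted verbatim as an external result (Main Theorem A of Benedetti, building on Gallais's Theorem 3.1), and the text explicitly refers the reader to those papers for the constructions and proofs. So there is no internal proof to compare against; what you have written is a sketch of how Benedetti's theorem itself might be proved.

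As such a sketch, your outline identifies the right ingredients but defers every genuinely hard step to the very results being proved. First, you propose to simplicially approximate each closure $\overline{W^u(p)}$ by a subcomplex of $T^r$; but these closures are in general singular, non-manifold sets, and ``mesh smaller than the chart radius'' does not make a prescribed subset into a subcomplex of a \emph{derived} subdivision of an arbitrary $T$ --- this is why Gallais and Benedetti work with the handle decomposition induced by $\phi$ rather than with the unstable manifolds directly. Second, and more seriously, your step ``extend $V$ over $T^r\setminus\bigcup_p K_p$ without introducing extra critical simplices'' is exactly the crux of Benedetti's theorem: a PL ball (a handle) need not be collapsible, and the content of Main Theorem A is precisely that after sufficiently many derived subdivisions each handle admits a collapse onto its attaching region plus a single cell. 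Invoking ``the collapsibility arguments underlying Gallais's and Benedetti's constructions'' at this point is circular. Third, you correctly flag the sign-respecting bijection between smooth flow lines and discrete gradient paths as the technical core, but you give no mechanism for it; this is the part of the argument that occupies most of Gallais's paper. So the proposal is a reasonable roadmap of the literature rather than a proof, and if your intent was to justify the proposition within this paper, the correct move is simply the one the paper makes: cite Benedetti and Gallais.
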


\begin{proposition}\label{commutative}
The diagram in Figure \ref{discretization}
is  commutative.
\end{proposition}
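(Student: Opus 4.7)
My plan is to establish the commutativity by first identifying the two vertical arrows \circled{2} as chain isomorphisms via Proposition~\ref{subdivision}, and then verifying that under these identifications, Laudenbach's homomorphism \circled{1} coincides on generators with the birth-death transition $h$ (and dually $g$). To set up, I would choose a PL triangulation $T$ of $M$ and an integer $r$ so that Proposition~\ref{subdivision} supplies discrete Morse functions $f_1$ and $f_2$ on $T^r$ whose discrete Morse complexes are isomorphic to the smooth Morse complexes of $\phi_1$ and $\phi_2$. Under the Benedetti isomorphism, each smooth critical point of index $q$ corresponds to a unique $f_j$-critical simplex of dimension $q$, and signed counts of smooth gradient flow lines match signed counts of discrete gradient $V_j$-paths, i.e.\ the connectedness coefficients $n^{f_j}(\cdot,\cdot)$.

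The second step is to unpack Laudenbach's explicit description of \circled{1}. In \cite{laudenbach}, the change of Morse complex across a cusp degeneration is governed by the pair of cancelling critical points $(\tilde\sigma,\tilde\alpha)$ of indices $i$ and $i-1$, the integer $k$ equal to the number (with sign) of flow lines between them, and the flow-line counts between $\tilde\sigma$ and the other surviving critical points. Transporting these data along Benedetti's isomorphism produces precisely the combinatorial package prescribed in Equations \ref{1}--\ref{5} of Definition~\ref{birth-death transition}. Hence on each generator the image of \circled{1} under \circled{2} reads term-by-term the same as the image under $h$ of the image under \circled{2}, so that $\circled{2}\circ\circled{1}=h\circ\circled{2}$ on the basis of critical points; the analogous verification for $g$ and the reverse smooth arrow completes the check of Figure~\ref{discretization}.

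The main obstacle is reconciling the two bookkeepings of signs and multiplicities, since $n^{f_j}$ is a signed count of discrete gradient paths defined via incidence numbers of the fixed orientation on $T^r$, while Laudenbach's coefficients arise from orientations of unstable manifolds in the smooth setting. This forces us to use the strong form of Proposition~\ref{subdivision}, namely that the isomorphism \circled{2} intertwines the boundary operators and not merely the chain groups; combined with Forman's cancellation lemma this guarantees that a cusp pair $(\tilde\sigma,\tilde\alpha)$ on the smooth side pulls back to a cancellable pair of critical simplices on the discrete side with the same signed incidence $k$. A secondary technical point is that the integer $r$ of Proposition~\ref{subdivision} may differ for $\phi_1$ and $\phi_2$; one passes to a common refinement, using that further derived subdivision together with an induced extension of the discrete Morse function preserves both the discrete Morse complex up to canonical isomorphism and the form of the birth-death transition, after which the local comparison described above goes through. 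By Theorem~\ref{A-degeneration}, both sides of the identity are chain maps, so verifying commutativity on generators of $S_q^{\phi_1}(M)$ is sufficient.
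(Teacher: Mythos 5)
Your route is genuinely different from the paper's. The paper disposes of Proposition~\ref{commutative} in three lines: the homomorphisms \circled{1} are chain maps (Laudenbach), the homomorphisms \circled{2} are isomorphisms (Proposition~\ref{subdivision}), and $h$, $g$ are chain maps (Theorem~\ref{A-degeneration}); the detailed constructions are deferred to the cited references. You instead propose to verify commutativity generator by generator, transporting Laudenbach's cusp data (the cancelling pair $(\tilde{\sigma},\tilde{\alpha})$, the integer $k$, the flow-line counts) through Benedetti's isomorphism and matching the result against Equations \ref{1}--\ref{5}. That is a more substantive plan, and arguably the right one: a square whose four sides are chain maps need not commute, so the paper's stated justification does not by itself establish commutativity, and the real content is exactly the term-by-term comparison you describe. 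The issues you flag --- reconciling sign conventions between incidence numbers and orientations of unstable manifolds, and passing to a common derived subdivision since the integer $r$ of Proposition~\ref{subdivision} may differ for $\phi_1$ and $\phi_2$ --- are genuine and are not addressed by the paper either.

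That said, your proposal leaves its central step unproved: the sentence asserting that transporting Laudenbach's data along Benedetti's isomorphism ``produces precisely the combinatorial package prescribed in Equations \ref{1}--\ref{5}'' \emph{is} the proposition, and you state it rather than derive it. Proposition~\ref{subdivision} is a statement about a single Morse function; it provides no compatibility between the discrete models of $\phi_1$ and $\phi_2$, and in particular no a priori reason why the connectedness homomorphism $h$ --- defined intrinsically by signed counts of $V_1$-gradient paths from $f_1$-critical to $f_2$-critical simplices --- should coincide with the conjugate of \circled{1} by the two vertical isomorphisms \circled{2}. Closing this requires exhibiting both discrete gradient fields on a common subdivision and a signed bijection between the discrete paths computing $n^{f_1}(\cdot,\cdot)$ and Laudenbach's flow lines; neither \cite{Benedetti} nor \cite{laudenbach} supplies that bijection. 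Finally, your closing remark that the chain-map property of both sides justifies checking only on generators is a red herring: any identity of linear maps may be checked on generators, and the chain-map property plays no role in that reduction.
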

Proposition \ref{commutative}
follows from the followings: 
homomorphisms 
\circled{1} are chain maps;
homomorphisms 
\circled{2} are isomorphisms;
and by
Theorem \ref{A-degeneration},
$h$ and $g$ are chain maps.
For the detailed constructions of \circled{1}, \circled{2} and the proofs,
one may refer to the papers.

Hence,
a relationship 
between smooth degenerations 
and discrete birth-death transitions 
can be established 
through the composition of  the homomorphisms.
With regard to the relationship,
we make the following remarks:
\begin{itemize}

\item  The reverse process of this construction is not universally feasible.
That  is,  for a given  triangulation $T$ of some smooth manifold $M$
and a discrete Morse function $f: T\to \mathbb{R}$,
it may not be possible to associate
$f$ to a smooth Morse function on $M$ such that
the  Morse complex and the discrete Morse complex are isomorphic.
(See \cite[Main Theorem B]{Benedetti})

\item Our Definition \ref{birth-death transition} employs 
general simplicial complexes $K$,
thus containing the cases of the results obtained by
the diagram of Figure 3,  in which $T$ is a triangulation
of some smooth manifold. 
This is exemplified   in Example 2 of Section 4.3.
\end{itemize}

  Additionally, we remark that some useful discussions about 
the simplex connectedness 
of subdivision 
are addressed \cite[Section 4]{birth and death}.

\subsection{ Examples of  Birth-Death Transition.}

For a smooth function $\phi$
on a manifold with a 
parameter $\lambda$,
the cusp degeneration 
can be described by the normal form:
$$\phi(\mathbf{x}, \lambda)= x_1^3 + \lambda x_1 \pm x_2^2 \pm \cdots \pm x_m^2.$$
Thus,  to 
study this bifurcation generally,
it is essential to understand
the one-dimensional case.
We consider the 
corresponding discrete settings as follows.

\subsection*{Example 1.}
The following example illustrates the modification of a discrete Morse complex near a birth-death point through the connectedness homomorphism. Figure \ref{continuous functions} depicts two continuous functions, 
$\phi_1$ and $\phi_2$, defined over the same interval 
$I=[-5,5].$

$\phi_1$ and $\phi_2$ are used to denote the transformation 
before and after a cusp degeneration.
Besides the endpoints of the interval,
function $\phi_1$ consists of two critical points;
and
$\phi_2$ consists of four critical points.

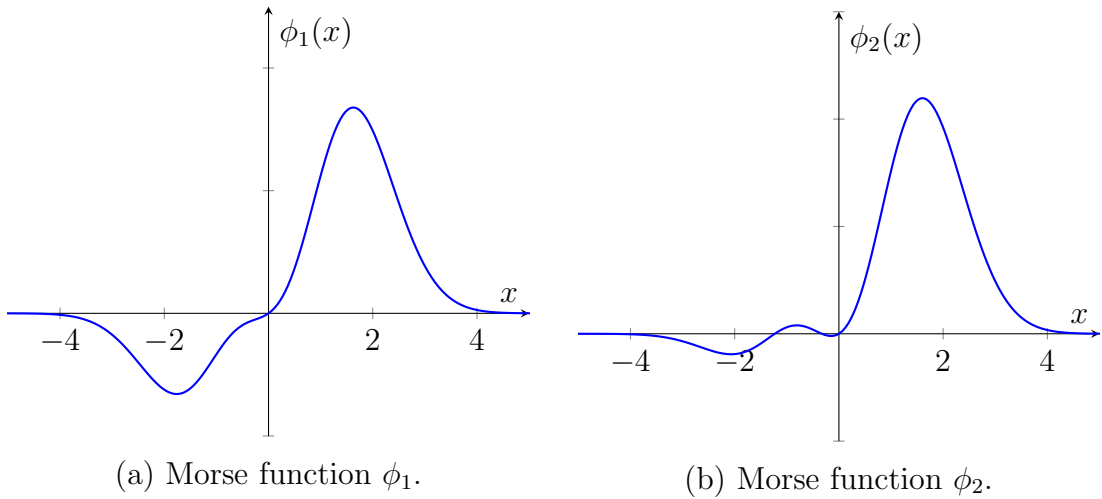
\begin{figure}[!h]
\label{smooth}
\centering
\begin{minipage}{.5\textwidth}
  \centering
 \begin{tikzpicture}

\begin{axis}[
    axis lines=middle,
    xlabel={$x$},
    ylabel={$\phi_1(x)$},
    xmin=-5, xmax=5,
    ymin=-10, ymax=25, 
    samples=400,
    yticklabels={},  
]
\addplot[blue, thick, domain=-10:10] {exp(-x^2/2)*(x^4/2 +9*x^3 + 6*x^2 +3*x)};
\end{axis}
\end{tikzpicture}

(a) Morse function $\phi_1$.
\end{minipage}%
\begin{minipage}{.5\textwidth}
  \centering
 \begin{tikzpicture}

\begin{axis}[
    axis lines=middle,
    xlabel={$x$},
    ylabel={$\phi_2(x)$},
    xmin=-5, xmax=5,
    ymin=-5, ymax=15, 
    samples=400, 
    yticklabels={},  
]
\addplot[blue, thick, domain=-10:10] {exp(-x^2/2)*(x^4/2 +4.5*x^3 + 6*x^2 +1.5*x)};
\end{axis}
\end{tikzpicture}

(b) Morse function $ \phi_2$.
\end{minipage}
\caption{Two Morse functions on the  interval $I$. }
\label{continuous functions}
\end{figure}

 Figure \ref{discrete functions} 
 displays the triangulation of the interval $I$
  into a one-dimensional simplicial complex 
  $K$.
Also,  continuous functions $\phi_1$ and $\phi_2$
are represented by 
discrete  functions $f_1$ and $f_2$
in (a) and (b) of Figure \ref{discrete functions}.
One can  check that both of them are discrete Morse functions.

For discrete Morse function $f_1$,
there are two $0$-dimensional critical 
simplices and 
one $1$-dimensional critical 
simplex,
labeled as $v_1^1, v_1^2$ and $ e_1^1$.
For discrete Morse function $f_2$,
there are three $0$-dimensional critical 
simplices and 
two $1$-dimensional critical 
simplices,
labeled as $v_2^1, v_2^2, v_2^3, 
 e_2^1$ and $e_2^2$.

Next, we  consider 
the homomorphisms 
$h_q: C_q^{f_1}(K) \to C_q^{f_2}(K)$
and 
$g_q: C_q^{f_2}(K) \to C_q^{f_1}(K),$
for  $q=0,1$.
One can check by the definition,
$h$ and $g$ are birth and death transitions corresponding to the $f_2$-critical
simplices pair $(e_2^2,v_2^2)$.
Simple computations in mod 2 allow us to 
conclude that 
$h_1(e_1^1)=e_2^1+ e_2^2$;
$h_0(v_1^1)=v_2^1$;
$h_0(v_1^2)=v_2^3$.
Boundary computations confirm that 
$h$ is a chain map:
$$\partial^{f_2} \circ h_1 (e_1^1)= v_2^1+v_2^3= h_0\circ \partial^{f_1} (e_1^1).$$

Additionally,  similar computations for 
$g$ confirm that $g$ is also a chain map.

\begin{figure}[!h ]

\begin{minipage}{\textwidth}

  \centering
\begin{tikzpicture}[>=stealth, thick, scale=1.25,
  decoration={markings, mark=at position 0.5 with {\arrow{>}}}
]
    \tikzstyle{vertex}=[circle, fill=black, inner sep=0pt, minimum size=5pt]
    \tikzstyle{redvertex}=[circle, fill=red, inner sep=0pt, minimum size=5pt]

    \foreach \x in {1,2,3,4,5,6,7,8,9,10} {
        \node[vertex, label=below:{}] (v\x) at (\x,0) {};
    }

    \node[redvertex, label=below:{$v_1^2$}] (v10) at (10,0) {};
    \node[redvertex, label=below:{$v_1^1$}] (v3) at (3,0) {};
	
    \draw[->] (v1) -- (v2);
        \draw[->] (v2) -- (v3);
    \draw[->] (v4) -- (v3);
    \draw[->] (v5) -- (v4);
    \draw[->] (v6) -- (v5);
    \draw[->] (v7) -- (v6);
    \draw[->] (v8) -- (v9);
    \draw[->] (v9) -- (v10);

    \draw[red] (v7) -- (v8) node[midway, above] {$e^1_1$}; 

    \draw (v1) -- (v2);
    \draw (v3) -- (v4);
    \draw (v4) -- (v5);
    \draw (v5) -- (v6);
    \draw (v6) -- (v7);
    \draw (v8) -- (v9);
    \draw (v9) -- (v10);

\end{tikzpicture}

(a) Discrete Morse function $f_1$.\\

\end{minipage}%

\begin{minipage}{.82\textwidth}
  \centering
\begin{tikzpicture}[>=stealth, thick, scale=1.25,
  decoration={markings, mark=at position 0.5 with {\arrow{>}}}
]
    \tikzstyle{vertex}=[circle, fill=black, inner sep=0pt, minimum size=5pt]
    \tikzstyle{redvertex}=[circle, fill=red, inner sep=0pt, minimum size=5pt]

    \foreach \x in {1,2,3,4,5,6,7,8,9,10} {
        \node[vertex, label=below:{}] (v\x) at (\x,0) {};
    }

   \node[redvertex, label=below:{$v_2^1$}] (v3) at (3,0) {};
    \node[redvertex, label=below:{$v_2^3$}] (v10) at (10,0) {};
   \node[redvertex, label=below:{$v_2^2$}] (v6) at (6,0) {};

    \draw[->] (v1) -- (v2);
        \draw[->] (v2) -- (v3);
    \draw[->] (v4) -- (v3);
    \draw[->] (v5) -- (v4);
    \draw[->] (v7) -- (v6);
    \draw[->] (v8) -- (v9);
    \draw[->] (v9) -- (v10);

    \draw[red] (v7) -- (v8) node[midway, above] {$e^1_2$}; 
 \draw[red] (v5) -- (v6) node[midway, above] {$e^2_2$}; 

\end{tikzpicture}
(b) Discrete Morse function $f_2$.
\end{minipage}
\caption{Two discrete Morse functions on the triangulation of $I$.   }
\label{discrete functions}
\end{figure}
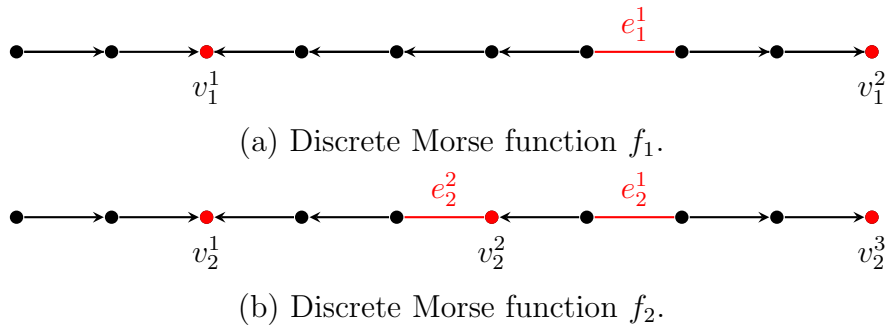

\subsection*{Example 2.}
In Proposition 4.5, 
we showed that 
the birth-death transitions on a triangulation $T$ of a
smooth manifold $M$ 
can be realized
through the composition of homomorphisms mentioned in 
Figure 3.
However, 
Theorem 4.2 contains this 
result as $K$ in Theorem 4.2 represents a general
simplicial complex.
We illustrate this fact with the following 
example.

Figure 6
visually depicts a birth-death transition of the cusp type
on a 
simplicial complex $K$,
which is not a triangulation of any smooth manifold.
Note that 
although $e_1^1\neq  e_2^1$, $e_1^1$ and $e_2^1$ are faithful to each other;
and 
$v_1^1$ and $v_2^2$ are faithful to each other, as well. 
We let $(e_2^2, v_2^1)$
be the pair of $f_2$-critical simplices to be cancelled.  
Then, 
$h$ and $g$ are  birth-death transitions.

That $h$ is a chain map 
follows from $f_1$ is optimal.
For $g$, 
we compute that
$g_0(v_2^1)=g_0(v_2^2)=v_1^1$;
$g_1(e_2^1)= e_1^1$,
$g_1(e_2^2)= 0$.
Furthermore,
$\partial_1^{f_2}(e_2^1)= 0$
and
$\partial_1^{f_2}(e_2^2)= v_2^1 + v_2^2$ 
in mod $2$.
Consequently, 
$g_0\circ \partial_1^{f_2}(e_2^1)= 0 = \partial^{f_1}_0 \circ g_1(e_2^1);$
and
$g_0\circ \partial_1^{f_2}(e_2^2)= 0 = \partial^{f_1}_0 \circ g_1(e_2^2).$
Hence, 
$g$ is also a chain map.

Note that the choice of 
critical simplices for cancellation is not necessarily
unique and depends on the orientation of $K$.
In this scenario, we can choose to cancel
either  the pair 
$(e_2^2,  v_2^1)$
or 
$(e_2^2,  v_2^2)$.
However, 
regardless of the choice made,
the remaining $0$-dimensional 
simplex remains strongly connected to 
$v_1^1$, and
$h$ and $g$ remain to 
be birth and death transitions.

Compared with Example 1,
the simplicial complex in Example 2 is
not locally Euclidean.
However, 
our focus should 
rather be on the gradient vector fields defined on this simplcial complex.
We can decompose the 
gradient vector fields 
into a set of gradient paths (See \cite[Example 4.9]{no.1} as an example).
Thus,
as a basic fact of discrete Morse theory \cite[Theorem 3.5]{Forman_guide},
any  gradient path does not 
include cycles,
thus ensuring each  gradient path is ``locally Euclidean.
Consequently,
the structure of gradient vector fields in Example 2
can be described as a union of 
the type of triangulations in Example 1.

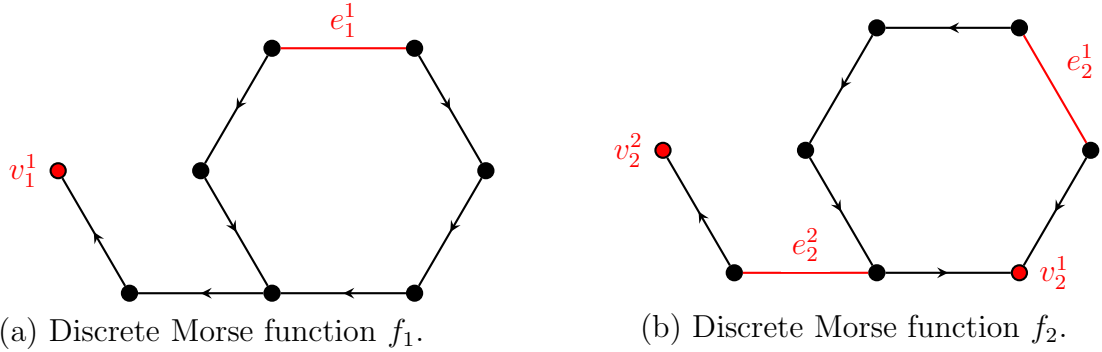
\begin{figure}

\begin{minipage}{.5\textwidth}

\begin{tikzpicture}[>=stealth, thick, scale=1.25, decoration={markings, mark=at position 0.5 with {\arrow{>}}}]
  \foreach \x in {1,2,...,6}{
    \node[draw, circle, fill=black, inner sep=2pt, label=above:] (vertex\x) at ({360/6 * (\x - 1)}:1.5cm) {};

  }

  \path let \p1 = ($(vertex1) - (vertex2)$), \n1 = {veclen(\x1,\y1)} in
  node[draw, circle, fill=black, inner sep=2pt, label=left:] (vertex7) at ($(vertex5) - (\n1, 0)$) {} 
  node[draw, circle, fill=red, inner sep=2pt, label={[text=red]left:$v_1^1$}] (vertex8) at ($(vertex4) - (\n1, 0)$) {};


\draw[postaction={decorate}] (vertex2) -- (vertex1);
\draw[postaction={decorate}] (vertex1) -- (vertex6);
\draw[postaction={decorate}] (vertex6) -- (vertex5);
\draw[postaction={decorate}] (vertex3) -- (vertex4);
\draw[postaction={decorate}] (vertex4) -- (vertex5);
\draw[red] (vertex2) -- (vertex3) node[midway, above, fill=white] {$e_1^1$} ;
\draw[postaction={decorate}] (vertex7) -- (vertex8);
\draw[postaction={decorate}] (vertex5) -- (vertex7);

\end{tikzpicture}

(a) Discrete Morse function $f_1$.
\end{minipage}%
\begin{minipage}{.5\textwidth}

\centering

\begin{tikzpicture}[>=stealth, thick, scale=1.25, decoration={markings, mark=at position 0.5 with {\arrow{>}}}]
  \foreach \x in {1,2,...,6}{
   \ifnum\x=6
      \node[draw, circle, fill=red, inner sep=2pt, label={[text=red]right:$v_2^1$}] (vertex\x) at ({360/6 * (\x - 1)}:1.5cm) {};
      \else
    \node[draw, circle, fill=black, inner sep=2pt, label=above:] (vertex\x) at ({360/6 * (\x - 1)}:1.5cm) {};
    \fi
  }

  \path let \p1 = ($(vertex1) - (vertex2)$), \n1 = {veclen(\x1,\y1)} in
  node[draw, circle, fill=red, inner sep=2pt, label={[text=red]left:$v_2^2$}] (vertex8) at ($(vertex4) - (\n1, 0)$) {} 
  node[draw, circle, fill=black, inner sep=2pt, label=left:] (vertex7) at ($(vertex5) - (\n1, 0)$) {};

\draw[postaction={decorate}] (vertex7) -- (vertex8);
\draw[red] (vertex5) -- (vertex7) node[midway, above, fill=white] {$e_2^2$} ;
\draw[postaction={decorate}] (vertex2) -- (vertex3);

\draw[postaction={decorate}] (vertex1) -- (vertex6);
\draw[postaction={decorate}] (vertex5) -- (vertex6);
\draw[postaction={decorate}] (vertex3) -- (vertex4);
\draw[postaction={decorate}] (vertex4) -- (vertex5);
\draw[red] (vertex1) -- (vertex2) node[midway, above right, fill=white] {$e_2^1$} ;

\end{tikzpicture}

(b) Discrete Morse function $f_2$.
\end{minipage}

\label{transition example}
\caption{A birth-death transition on a simplicial complex that is not 
a triangulation of a smooth manifold.}
\end{figure}

\subsection{Birth-Death Transitions on the Complex of Discrete Morse Functions}

In discrete Morse theory, 
the set of  discrete Morse functions on a simplicial complex
is typically studied by  
defining  an equivalence relation:
discrete Morse functions $f_1$ and $f_2$ are considered \textit{equivalent}
if their corresponding 
gradient vector fields $V_1$ and $V_2$ 
are identical as sets.
This definition ignores 
the numerical values  of the 
discrete Morse functions,  instead focusing 
 on  the order of values within
each simplex's neighborhood.

Also, 
the collection of gradient vector fields form a simplicial complex,
known as the
\textit{ complex of discrete Morse functions},
as initially studied in \cite{chari}.
Furthermore, when $K$ is a graph,
the simplicial complex is the
complex of rooted forests, 
 introduced in \cite{Kozlov}.

More precisely,
given a simplicial complex $K$,
we define the \textit{complex of discrete Morse functions} $\mathcal{M}(K)$
on $K$ as follows.
We say a \textit{primitive vector field} of $K$
is a set  $P$ containing one  pair of simplices 
$\alpha$  and  $\sigma$ of $K$ such that 
$\alpha \prec \sigma$ and their codimension is $1$.
We let $\mathcal{P}(K)= \{P_1, P_2, \ldots, P_n  \}$ 
be the set of all primitive vector fields of $K$.

We construct $\mathcal{M}(K)$
as follows:
\begin{itemize}
\item The set of $0$-simplices $\mathcal{M}^{0}(K)= \mathcal{P}(K)$;
\item $( P_{i_0}, P_{i_1}, \ldots, P_{i_k} )$ forms a $k$-simplex in 
$ \mathcal{M}^k(K)$
if $V= \{ P_{i_0}, P_{i_1}, \ldots, P_{i_k} \}$ 
is a gradient vector field on $K$.

\end{itemize}

Note that the collection of primitive vector fields $V$
is a gradient vector field
if and only if
\begin{enumerate}[label=(\subscript{C}{{\arabic*}})]
\item each simplex in $V$ appears at most  once; and
\item $V$ is acyclic.
\end{enumerate}
The simplices that do not appear in $V$ are called \textit{critical}.

It is not hard to see that
	each simplex in $\M(K)$ 
corresponds to a class of discrete Morse functions, identified
by  those  having the same gradient vector field.
Conversely, 
each gradient vector field corresponds to 
a simplex of $\M(K)$, as well.
However, technically,  we have to eliminate 
the case of \textit{empty simplex} $\emptyset$
of $\M(K)$,
because the empty simplex corresponds to 
the
 gradient vector field $V$ where every simplex in $V$ is critical. 
Figure \ref{complex example} 
illustrates  the complex of  discrete Morse function $\M(K)$ of $K$.

In this subsection,
we explain the relationship between birth-death transitions 
and the
complex of discrete Morse functions.

\begin{lemma}
Let $f_1, f_2: K\to \mathbb{R}$
be discrete Morse functions
and $V_1, V_2$ be the corresponding gradient vector
field, respectively.
Suppose that 
$h_q: C_q^{f_1} \to C_q^{f_2}$ 
and 
$g_q: C_q^{f_2} \to C_q^{f_1}$ 
be the connectedness homomorphisms.
Then, $V_1=V_2$
if and only if $h=g=\id.$
\label{f_1=f_2}
\end{lemma}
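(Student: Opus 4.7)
The plan is to establish the two directions of the equivalence separately. The forward direction $V_1 = V_2 \Rightarrow h = g = \id$ rests on the observation that critical simplices of a single gradient vector field admit no non-trivial gradient paths to one another; the backward direction is subtler and I expect it to be the main obstacle.

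For the forward direction, I would first note that since a simplex is $f_i$-critical exactly when it does not appear in any pair of $V_i$, the assumption $V_1 = V_2$ immediately yields $Cr_q(f_1) = Cr_q(f_2)$ for all $q$, and in particular $C_q^{f_1} = C_q^{f_2}$ on the same generators. Fixing $q \ge 1$, $\tilde\sigma_1 \in Cr_q(f_1)$, and $\sigma_j \in Cr_q(f_2)$, I would analyze $n^{f_1}(\tilde\sigma_1, \sigma_j)$ by inspecting $V_1$-gradient paths in dimensions $(q-1, q)$ from $\tilde\sigma_1$ to $\sigma_j$. Any non-trivial such path must terminate with a gradient pair $(\alpha_r, \sigma_j) \in V_1$, contradicting the $V_1$-criticality of $\sigma_j$; hence only the trivial path contributes, and the convention $n^{f_i}(\tilde\sigma, \tilde\sigma) = 1$ gives $n^{f_1}(\tilde\sigma_1, \sigma_j) = \delta_{\tilde\sigma_1, \sigma_j}$. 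Therefore $h_q(\tilde\sigma_1) = \tilde\sigma_1$, and the case $q = 0$ follows by the same argument applied to $V_2$-paths (using $V_2 = V_1$). The reasoning for $g$ is symmetric.

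For the backward direction, I would first extract from $h_q$ being the identity endomorphism that the chain groups $C_q^{f_1}$ and $C_q^{f_2}$ share the same basis, forcing $Cr_q(f_1) = Cr_q(f_2)$ for all $q$. The hard part will be promoting this agreement of critical sets to the equality $V_1 = V_2$, since a single critical set can in general be realized by several distinct gradient vector fields. My plan is a dimension-by-dimension induction: assuming the pairs of $V_1$ and $V_2$ involving simplices below dimension $q$ already coincide, I would exploit the identity condition in tandem with the discrete-Morse-function constraint that each non-critical simplex admits at most one $V$-partner, together with the acyclicity of $V$, to force the $(q-1, q)$-pairs to match. This synthesis is where the main technical effort concentrates, and where I expect the full strength of the identity condition on both $h$ and $g$ to be indispensable.
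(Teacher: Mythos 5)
Your forward direction is correct and is essentially the paper's own argument: when $V_1=V_2$ the critical sets coincide, a nontrivial $V_1$-path in dimensions $(q-1,q)$ must terminate at a $q$-simplex that is paired in $V_1$ and therefore cannot reach a critical one, so only the diagonal term survives and $h_q=g_q=\id$ (the paper compresses exactly this to one sentence).

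The backward direction is where the proposal breaks down, and not merely because you leave the key step as a plan. Observe that your own forward argument uses only the equality $Cr_q(f_1)=Cr_q(f_2)$, never the full hypothesis $V_1=V_2$: once the critical sets agree in every dimension, no nontrivial gradient path of either field can end at a critical simplex, so $h=g=\id$ follows automatically. Hence the condition $h=g=\id$ is equivalent to $Cr_q(f_1)=Cr_q(f_2)$ for all $q$, and the induction you propose would have to recover $V_1=V_2$ from equality of critical sets alone. That is impossible: distinct acyclic matchings can have identical critical cells. For instance, let $K$ consist of two $2$-simplices $abc$ and $bcd$ glued along the edge $bc$. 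Both
$V_1=\{(ab,abc),(bd,bcd),(a,ac),(d,cd),(b,bc)\}$ and
$V_2=\{(ac,abc),(cd,bcd),(a,ab),(d,bd),(b,bc)\}$
are gradient vector fields whose unique critical simplex is the vertex $c$, so $h=g=\id$ for this pair, yet $V_1\neq V_2$. The step you flagged as ``the main obstacle'' is therefore a genuine obstruction rather than a technical one: the ``only if'' implication fails as stated. (The paper's own proof dismisses the converse as ``straightforward,'' so your instinct that it is the hard part is well founded --- it is in fact the false part.)
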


\begin{proof}

Suppose $V_1=V_2$.
It is easy to see that for any $\sigma_1$,
$h_q(\sigma_1)=\sigma_1=g_q(\sigma_1)$,
because there is no gradient path 
between two $q$-dimensional critical simplices.

The proof of the converse is straightforward.
\end{proof}

Next, 
we describe the birth-death transition 
on the complex of discrete Morse functions.
By $\overline{\sigma}_1, \overline{\sigma}_2 \in \M(K)$,
we denote the simplex in $\M(K)$
corresponding to $V_1,  V_2$,
respectively.

\begin{lemma}
The following statements hold true.
\begin{enumerate}
\item $h$ is a birth transition if and only if
 $\overline{\sigma}_1 \succ \overline{\sigma}_2$,
and $\dim \overline{\sigma}_1- \dim \overline{\sigma}_2=1$.

\item $h$ is a death transition if and only if 
 $\overline{\sigma}_1 \prec \overline{\sigma}_2$,
and $\dim \overline{\sigma}_1- \dim \overline{\sigma}_2=-1$.
\end{enumerate}

\end{lemma}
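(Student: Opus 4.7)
My plan is to translate the statement into the language of gradient vector fields via the natural dictionary between $\M(K)$ and gradient vector fields on $K$. Each gradient vector field $V$ on $K$ corresponds to the simplex $\overline{V}\in\M(K)$ of dimension $|V|-1$ whose vertices are the primitive vector fields comprising $V$, and the face relation $\overline{V'}\prec\overline{V}$ in $\M(K)$ is equivalent to the set-theoretic inclusion $V'\subset V$. Under this correspondence, the conclusion $\overline{\sigma}_1\succ\overline{\sigma}_2$ with $\dim\overline{\sigma}_1-\dim\overline{\sigma}_2=1$ is precisely $V_2\subsetneq V_1$ together with $|V_1|=|V_2|+1$. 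Thus the lemma reduces to a characterization of birth-death transitions in terms of set-inclusion of gradient vector fields.

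For the forward direction of $(1)$, suppose $h$ is a birth transition. The explicit formulas for $h_q$ in Definition 4.1 force every $f_1$-critical simplex to appear as a generator of $C_q^{f_2}$, so $Cr(f_1)\subset Cr(f_2)$ with $Cr(f_2)\setminus Cr(f_1)=\{\tilde{\sigma},\tilde{\alpha}\}$. Since each non-critical simplex lies in exactly one pair of its gradient vector field, $|V_i|=(|K|-\#Cr(f_i))/2$ gives $|V_1|=|V_2|+1$ immediately, yielding the dimension claim. To obtain $V_2\subset V_1$, observe that any pair $(\mu,\nu)\in V_2$ consists of simplices that are $f_2$-non-critical, hence also $f_1$-non-critical and distinct from $\tilde{\sigma},\tilde{\alpha}$. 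I would then argue by contradiction: if some such $(\mu,\nu)\in V_2$ were absent from $V_1$, then $\mu$ or $\nu$ would be paired differently in $V_1$, creating or destroying $V_1$-gradient paths passing through it. Tracing how these altered paths would contribute to $\partial^{f_1}$ on $f_1$-critical simplices in dimensions $q\neq i, i+1$ shows the resulting discrepancy with $\partial^{f_2}$ cannot be absorbed into the correction terms of Equations $(2)$-$(5)$, contradicting Equation $(1)$. The one extra pair in $V_1$ must then involve both $\tilde{\sigma}$ and $\tilde{\alpha}$, and is therefore $(\tilde{\alpha},\tilde{\sigma})$.

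For the converse, suppose $V_2\subsetneq V_1$ with a single extra pair $(\tilde{\alpha}^{(i-1)},\tilde{\sigma}^{(i)})\in V_1\setminus V_2$. Then $Cr(f_2)=Cr(f_1)\sqcup\{\tilde{\sigma},\tilde{\alpha}\}$, matching the critical-simplex counts required by a birth transition. To verify the homomorphism formulas for $h$, $g$ and the boundary equations $(1)$-$(5)$, I would partition the $V_1$-gradient paths between critical simplices according to whether they traverse the new pair $(\tilde{\alpha},\tilde{\sigma})$: paths avoiding it coincide with $V_2$-paths, yielding Equation $(1)$ and the identity part of the formulas for $h_q$ and $g_q$; paths traversing it exactly once decompose canonically as a $V_1$-path terminating at $\tilde{\sigma}$ followed by a $V_2$-path issuing from $\tilde{\alpha}$, producing precisely the correction terms in Equations $(2)$-$(5)$ with $k=n^{f_2}(\tilde{\sigma},\tilde{\alpha})$. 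Any path traversing the new pair more than once would create a $V_1$-cycle, contradicting acyclicity of the gradient vector field.

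Statement $(2)$ is obtained from $(1)$ by interchanging the roles of $f_1$ and $f_2$: saying $h$ is a death transition is, up to relabeling, saying $g$ is a birth transition in the sense of $(1)$, hence equivalent to $V_1\subsetneq V_2$ with $|V_2|=|V_1|+1$, i.e.\ $\overline{\sigma}_1\prec\overline{\sigma}_2$ with $\dim\overline{\sigma}_1-\dim\overline{\sigma}_2=-1$. The main obstacle I expect is the rigidity step in the forward direction of $(1)$: ruling out any ``rearrangement'' of existing pairs, which is not immediately visible from Equations $(1)$-$(5)$ and likely requires a careful analysis of how newly-added or removed gradient paths would contribute to the boundary maps on critical simplices in adjacent dimensions.
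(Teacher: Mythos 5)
Your proposal follows essentially the same route as the paper: both translate the statement through the dictionary between simplices of $\M(K)$ and gradient vector fields, and both reduce the lemma to showing that a birth transition corresponds to deleting exactly one primitive pair, so that $V_2\subsetneq V_1$ with $|V_1|=|V_2|+1$ and hence $\overline{\sigma}_2$ is a codimension-one face of $\overline{\sigma}_1$. The ``rigidity step'' you flag as the main obstacle is precisely the point the paper does not argue either---its proof simply asserts $V_2=V_1\setminus\{(\tilde{\alpha},\tilde{\sigma})\}$ from the definition of a birth transition and states that the converse ``follows from Definition 4.1''---so your critical-simplex counting and your decomposition of gradient paths according to whether they traverse the new pair already supply more justification than the published argument, which moreover writes the face relation with the opposite orientation from the lemma's statement (an apparent typo that your version gets right).
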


\begin{proof}
We prove statement (1).
Suppose that 
$h$ is a birth transition,
and a pair of simplices $\alpha \prec \sigma$
are born by $h$.
Then, 
$V_2 = V_1 \setminus \{ \alpha, \sigma \}$.
By the definition of complex of discrete Morse functions,
$\overline{\sigma}_1$ is a subcomplex of 
$\overline{\sigma}_2$,
and $\dim \overline{\sigma}_1- \dim \overline{\sigma}_2=-1$.

The proof of the converse follows from 
Definition 4.1.

(2) follows from a similar proof.
\end{proof}

\begin{proposition}
If $\overline{\sigma}_1$ and $\overline{\sigma}_2$
are in the same component of $\M(K)$,
then $f_1 \Tsim f_2$.

\end{proposition}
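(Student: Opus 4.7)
The plan is to convert a topological path in $\M(K)$ between $\overline{\sigma}_1$ and $\overline{\sigma}_2$ into an explicit birth-death transition sequence of discrete Morse functions, and then invoke the preceding lemma at each step.

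First, I would reduce topological connectivity in $|\M(K)|$ to combinatorial connectivity in a Hasse-type graph whose vertices are the simplices of $\M(K)$ and whose edges join codimension-one face pairs $\overline{\tau} \prec \overline{\mu}$. Since each closed simplex is contractible, the connected components of $|\M(K)|$ coincide with those of its $1$-skeleton; moreover, any simplex $\overline{\mu}$ of $\M(K)$ is Hasse-connected to each of its own vertices via the descending chain obtained by removing primitive vector fields one at a time. The crucial observation is that every subset of an acyclic set of primitive vector fields is again acyclic, hence automatically satisfies conditions $(C_1)$ and $(C_2)$, so every intermediate set really is a valid gradient vector field and corresponds to a simplex of $\M(K)$.

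Next, I would assemble three Hasse paths into a single sequence of simplices in $\M(K)$: a descent from $\overline{\sigma}_1$ to a vertex $v_1$ of $\overline{\sigma}_1$; a $1$-skeleton edge path $v_1 = w_0, e_0, w_1, e_1, \ldots, w_m = v_2$ joining $v_1$ to a vertex $v_2$ of $\overline{\sigma}_2$, which exists precisely because $\overline{\sigma}_1$ and $\overline{\sigma}_2$ lie in the same component; and finally an ascent from $v_2$ up through faces of $\overline{\sigma}_2$. This yields a sequence $\overline{\sigma}_1 = \overline{\phi}_0, \overline{\phi}_1, \ldots, \overline{\phi}_n = \overline{\sigma}_2$ in which every consecutive pair is in a codimension-one face relation.

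For each index $i$, pick any discrete Morse function $\phi_i$ whose gradient vector field corresponds to $\overline{\phi}_i$, with $\phi_0 = f_1$ and $\phi_n = f_2$. By the preceding lemma, the connectedness homomorphism between $\phi_i$ and $\phi_{i+1}$ is then either a birth or a death transition, so $(\phi_0, \phi_1, \ldots, \phi_n)$ is a birth-death transition sequence in the sense of the definition, and therefore $f_1 \Tsim f_2$. The main point that needs care is the reduction to Hasse-connectivity together with the acyclicity check for each intermediate subset of primitive vector fields; once that is established, the remainder is a routine application of the previous lemma at each step.
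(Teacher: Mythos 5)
Your proposal is correct and follows essentially the same route as the paper: convert a path in the component of $\M(K)$ into a chain of codimension-one face relations and apply the preceding lemma at each step to obtain a birth-death transition sequence. The extra details you supply (reduction to the Hasse-type graph, the descent/ascent through faces, and the observation that subsets of an acyclic gradient vector field remain acyclic) are exactly the points the paper's two-sentence proof leaves implicit, and they are all sound.
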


\begin{proof}
Suppose that 
$\overline{\sigma}_1$ and $\overline{\sigma}_2$
are in the same component of $\M(K)$.
Then, 
there is a path consisting of 
simplices with adjacent dimensions 
connecting 
$\overline{\sigma}_1$ and $\overline{\sigma}_2$.
By Lemma 4.7,
we can construct a sequence of 
birth/death transitions 
from the simplicial path.
\end{proof}

Note that the converse of Proposition 4.8 is ``almost"
true if we exclude the $\emptyset$ case in the sequence.

\begin{example}
Figure \ref{complex example} $(b)$
illustrates 
an example of the complex of discrete 
Morse functions 
obtained from the simplicial complex $K$ in $(a)$.

We let  $\overline{\sigma}_1=(a,e_1)$ and 
$\overline{\sigma}_2=((a,e_1), (b,e_2))$.
Thus, the corresponding gradient vector fields
$V_1=\{ (a,e_1) \}$
and 
$V_2=\{(a,e_1), (b,e_2) \}$.
Hence, 
one can check that
the 
connectedness homomorphisms
$h$ and $g$ 
are a death and birth transition,
respectively.

\begin{figure}[h]

\begin{minipage}{.5\textwidth}

\begin{tikzpicture}[scale=1.5, thick]
  \node[draw, circle, fill=black, inner sep=2pt, label=left:a] (vertex1) at (0, 0) {};
  \node[draw, circle, fill=black, inner sep=2pt, label=right:b] (vertex2) at (2, 0) {};
  \node[draw, circle, fill=black, inner sep=2pt, label=above:c] (vertex3) at (1, 1.73) {}; 

  \draw (vertex1) -- (vertex2) node[midway, below, yshift=-5pt, fill=white] {$e_1$};
  \draw (vertex2) -- (vertex3) node[midway, right, xshift=5pt, fill=white] {$e_2$};
  \draw (vertex3) -- (vertex1) node[midway, left, xshift= -5pt, fill=white] {$e_3$};
\end{tikzpicture}

(a) Simplicial complex  $K$.
\end{minipage}%
\begin{minipage}{.5\textwidth}

\centering
\begin{tikzpicture}[scale=2, thick]
  \node[draw, circle, fill=black, inner sep=2pt, label=above left:{$(a,e_1)$}] (ae1) at (0.1,1) {};
  \node[draw, circle, fill=black, inner sep=2pt, label=above right:{$(c,e_2)$}] (ce2) at (2,1) {};
  \node[draw, circle, fill=black, inner sep=2pt, label=below right:{$(a,e_3)$}] (ae3) at (1.9,0) {};
  \node[draw, circle, fill=black, inner sep=2pt, label=below left:{$(b,e_2)$}] (be2) at (0,0) {};
  \node[draw, circle, fill=black, inner sep=2pt, label=below:{$(c,e_3)$}] (ce3) at (0.7,0.6) {};
  \node[draw, circle, fill=black, inner sep=2pt, label=above:{$(b,e_1)$}] (be1) at (2.7,0.6) {};
  
  \draw (ae1) -- (be2);
  \draw (ae1) -- (ce3);
  \draw (be2) -- (ce3);
  
   \draw (ae3) -- (be1);
  \draw (ae3) -- (ce2);
  \draw (be1) -- (ce2);
  
     \draw (ae3) -- (be2);
  \draw (ae1) -- (ce2);
  \draw (be1) -- (ce3);
\end{tikzpicture}

(b) Discrete Morse function complex $\mathcal{M}(K)$
\end{minipage}

\caption{An example of discrete Morse function complex.}
\label{complex example}
\end{figure}
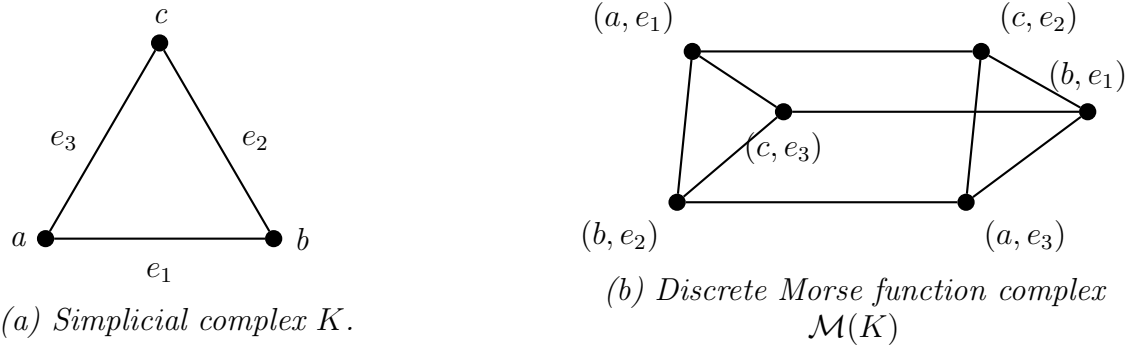

\end{example}

From the discussions above,
we can see that a birth/death transition $h$ corresponds to
a unique pair of simplices $(\overline{\sigma}_1, \overline{\sigma}_2)$
of $\M(K)$.
Hence, we can further construct the gradient vector fields 
on $\M(K)$
by regarding each $h$ as a primitive  vector field on $\M(K)$.
	When a collection of  primitive vector fields obtained by 
a family of birth/death transitions 
$\{h_i\}$ satisfies the conditions ($C_1$) and  ($C_2$) 
mentioned above,
the collection form a \textit{gradient vector field} on $\M(K)$.
This construction is 
a foundational step for the 
discrete Morse theory of the complex of discrete Morse functions.

Although several works \cite{chari, homotopy_type, homotopy_type 2}
have investigated the homotopy type 
 of $\M(K)$,
computing the homotopy type  of $\M(K)$
remains a challenging task.
One primary reason is that as
$K$ increases in size,
$\M(K)$
becomes significantly more complicated.
However, the gradient vector field defined on $\M(K)$
could be be utilized here in 
terms of discrete Morse functions on $K$ and birth-death transitions. 
Since a connected 
simplicial complex $K$ is uniquely determined by 
$\M(K)$ \cite{determined},
this may lead to a  more convenient 
approach to investigating the homotopy type of $\M(K)$.

One of our 
long-term goal of this
series of work is 
to establish a
discrete version of 
Cerf theory,
which is introduced in \cite{cerf}.
The core of Cerf theory
 is the Cerf diagram, which visually represents the critical
  value changes 
  of a generic one-parameter family of functions on a manifold.
  This is pivotal for understanding bifurcations
   and the structural stability of function spaces. 
   The theory notably applies to gradient vector fields 
   and their trajectories, studying how critical points
    can be created or 
    eliminated through perturbations, known as births and deaths in the context of 
smooth   Morse theory.

Some discrete counterparts have been considered, 
e.g., Chari-Joswig \cite{chari}, King-Knudson-Mramor Kosta \cite{birth and death}.
Also recently,
there appeared other attempts for discussing 
a discrete analogy to the smooth Cerf theory, such as Br\"{u}ggemann \cite{hyperplane}.
Our approach 
offers a distinct advantage 
through the concepts of simplex connectedness and function connectedness, 
which may lead to more 
direct geometric understanding:
by  subdivisions of the gradient vector fields,
we describe 
a discrete analogy of degenerations of 
degenerated critical simplices \cite{no.3}.

\newpage


\begin{thebibliography}{9}

\bibitem{Arnold}
V. I. Arnold , V. V. Goryunov , O. V. Lyashko , V. A. Vasil’ev,
Singularity Theory I. Springer Berlin, Heidelberg (1998).


\bibitem{Discrete Morse theory on graphs}
R. Ayala, L.M. Fernández, D. Fernández-Ternero, J.A. Vilches,
Discrete Morse theory on graphs.
Topology and its Applications 156 (2009), 3091–3100.





\bibitem{Benedetti}
B. Benedetti,
Smoothing discrete Morse theory, Ann. Sc. Norm. Super. Pisa Cl. Sci. (5)
Vol. XVI (2016), 335-368.

\bibitem{hyperplane}
J.  Br\"{u}ggemann,
On The Moduli Space of Discrete Morse Functions, 
arXiv:2312.14032.

\bibitem{determined}
Capitelli, N.A., Minian, E.G. A Simplicial Complex is Uniquely Determined by Its Set of Discrete Morse Functions. Discrete Comput Geom 58, 144–157 (2017). https://doi.org/10.1007/s00454-017-9865-z

\bibitem{cerf}
J. Cerf,
 La stratification naturelle des espaces de fonctions différentiables réelles et le théorème de la pseudo-isotopie. Publications Mathématiques de l'IHÉS, Tome 39 (1970), pp. 5-173. 

\bibitem{chari}
M. K. Chari,  M.  Joswig, 
Complexes of discrete Morse functions,
Discrete Mathematics 302 (2005) 39-51.

\bibitem{homotopy_type}
C. Donovan,  M. Lin,  N. A. Scoville,
On the homotopy and strong homotopy type of complexes of discrete Morse functions. 
Canadian Mathematical Bulletin. 2023;66(1):19-37. doi:10.4153/S0008439522000121.

\bibitem{homotopy_type 2}
C. Donovan, N. A. Scoville,
Star clusters in the matching, Morse, and generalized complex of discrete Morse functions.
New York J. Math. 29 (2023) 1393–1412.


\bibitem{Forman_guide} 
R. Forman,
A User's Guide to Discrete Morse Theory.
Seminaire Lotharingien de Combinatoire 48 (2002), Article B48c.

\bibitem{Morse Theory for Cell Complexes}
R. Forman,
Morse Theory for Cell Complexes.
Advances in Mathematics 134, (1998), 90-145.

\bibitem{Gallais}
E. Gallais,
Combinatorial realization of the thom-smale complex 
via discrete Morse theory,
Annali della Scuola Normale Superiore di Pisa - Classe di Scienze,
pp.229--252.

\bibitem{Guest}
M. Guest, 
Morse theory in the 1990's,
arXiv:math/0104155.

\bibitem{birth and death} 
H. King, 
K. Knudson, 
N.Mramor Kosta, 
Birth and death in discrete Morse theory, J. Symb. Comput. 78 (2017) 41–60.

\bibitem{Kozlov}
D. N.  Kozlov, 
Complexes of directed trees, 
J. Comb. Theory Ser. A 88 (1) (1999) 112–122.



\bibitem{Discrete Morse Theory Kozlov}
D. N.  Kozlov,
Organized Collapse: An Introduction to 
Discrete Morse Theory,
Graduate Studies in Mathematics, 207.
American Mathematical Society, Providence, RI, 2020.


\bibitem{laudenbach}
F.  Laudenbach,
Appendix. On the Thom-Smale complex,
Astérisque, tome 205 (1992),  219-233.


\bibitem{Milnor}
J. Milnor, Morse theory.  Annals of Mathematics
Studies, No. 51, Princeton University Press, Princeton, N.J., 1963.







\bibitem{Discrete Morse Theory scoville}
N. A. Scoville,
Discrete Morse Theory.
 Student  Mathematical Library  vol. 90,
American Mathematical Society, Providence, RI, 2020.



\bibitem{no.1}
C. Zheng,
Persistent pairs and strong connectedness in discrete Morse functions on simplicial complex \rom{1},  Topology and its Applications.
https://doi.org/10.1016/j.topol.2024.108844.

\bibitem{no.2}
C. Zheng,
Connectedness in discrete Morse functions on simplicial complex \rom{2}, in preparation.


\bibitem{no.3}
C. Zheng,
in preparation.

\end{thebibliography}
\end{document}